\documentclass{article}
\usepackage{mathrsfs}
\usepackage{graphicx}
\usepackage{amsmath}
\usepackage{amssymb}
 \usepackage{txfonts}
\newtheorem{thm}{Theorem}[section]

\newtheorem{lem}[thm]{Lemma}
\newtheorem{prop}[thm]{Proposition}

\begin{document}

\begin{center}

{\Large \bf   On the number of non-cyclic subgroups of   finite $p$-groups \footnote{* Corresponding author: W. Meng ( mlwhappyhappy@163.com).W. Meng is supported by  Guangxi Natural Science
  Foundation Program (2025JJA110017), National Natural Science Foundation of China (12161021), Center for Applied Mathematics of Guangxi(GUET) and  Guangxi Colleges and Universities Key Laboratory of Data Analysis and Computation. Jia Liu is supported by Innovation Project of GUET Graduate Education(2025YCXS124).
}
}

\end{center}

\vskip0.5cm

\begin{center}

Jia Liu$^a$, Li Ma$^b$, Wei Meng$^a*$

 a. School of Mathematics  and Computing Science, Guilin University of Electronic Technology,
   Guilin, Guangxi, 541002, P.R. China.

b. School of Mathematics and Statistics, Qujing Normal University, Qujing, Yunnan, 655000, P.R. China.

  E-mails: 3542414643@qq.com, 821962782@qq.com,  mlwhappyhappy@163.com,
\end{center}

\begin{abstract}
Let $G$ be a finite $p$-group and $\delta(G)$ denote the number of all non-cyclic subgroups of $G$. In this paper, an upper bound  for $\delta(G)$ is obtained. Furthermore,  we  prove that $\delta(G)\leq \delta(M_p(1, 1, 1) \times C_{p}^{n-3})$ (if $p=2$, then $\delta(G)\leq \delta(D_8\times C_{2}^{n-3})$),  for any  non-elementary  abelian $p$-group  $G$ of order $p^n$.
\end{abstract}

{\small Keywords:  $p$-groups, non-cyclic subgroups, extraspecial 2-group, }

{\small Mathematics Subject Classification (2020): 20D15, 20D30}

\section{Introduction}

Throughout, all groups considered are  finite $p$-groups where $p$ is a prime number. The problem of counting subgroups is an important aspect of the study of finite $p$-groups. Let $G$ be a finite group of order $p^n$ and $s_m(G)$ represent the number of subgroups of order $p^m$ in $G$.   It is well known that $G$ is a cyclic group if $s_m(G) = 1$ for all $1\leq m\leq n$ (see \cite {BE2}).  Kulakoff \cite {KU} showed that $s_m(G)\equiv 1$ or $1+p$ (mod $p^2$) for $p>2$. Hua and Tuan   studied the possible cases of the number of subgroups $s_m(G)$ of the $p$-group $G $ modulo $p^3$ and other counting problems, and obtained many remarkable counting theorems (See \cite {HU1, HU2, TU1}). Later,   many scholars    investigated     $s_m(G)$ modulo $p^3$, such as Djubjuk \cite {DJ}, Berkovich \cite {BE1}, Zhang and Qu \cite {QU1, ZH1, ZH2}.

On the other hand, Fan \cite {FA} studied an upper bound for $s_m(G)$  in finite $p$-groups. He proved that $s_1(G)\leq  \left [\begin{array}{c}
n \\
1
\end{array}
\right ]_p$, with   equality if and only if $\exp(G)=p$, and $s_m(G)\leq \left [\begin{array}{c}
n \\
m
\end{array}
\right ]_p$ for $2\leq m<n$, with  equality if and only if $G$ is an elementary abelian $p$-group.
Fan's results shows that the $p$-group with the largest number of subgroups of each order is an elementary abelian $p$-group. A natural question is: what are the $p$-groups with the largest number of subgroups of each order like apart from elementary abelian $p$-groups?

  In 2013, Qu \cite {QU2} answered the above question for $p>2$. He  proved that if $G$ is a group of order $p^n$ and $N$ is a normal subgroup of order $p$ of $G$, then $s_k(G) \leq s_k(G/N \times C_p)$ for all $ 1 \leq k \leq n$.  Furthermore,   let
$\widetilde{G} = M_p(1, 1, 1) \times C_p^{n-3}$, where $M_p(1,1,1)$ is a non-abelian group of order $p^3$ with exponent $p$. Qu \cite {QU2} also showed that if $G$ is non-elementary abelian $p$-group, then $s_k(G) \leq s_k(\widetilde{G})$ for all $1 \leq k \leq n$.
 Let $s(G)$ denote the number of all subgroups of $G$. Then the results of Qu showed that the inequality  $s(G)\leq s(M_p(1, 1, 1) \times C_p^{n-3})$   holds for all non-abelian $p$-groups.  As a continuation of the above investigations, T\u{a}rn\u{a}ucean \cite {TA} proved that if $G$ is a non-elementary abelian $2$-group of order $2^n$, then
$s(G) \leq  s(D_8 \times C_{2}^{n-3})$. Dually, Aivazidis and M\"{u}ller \cite {AI} obtained the lower bound of $s(G)$ for finite non-cyclic $p$-groups.  Meng and Lu \cite {ME2} generalized the results of Aivazidis and MM\"{u}ller on all finite non-cyclic
nilpotent groups with specific order.

Recently, Lazorec,    Shen and  T\u{a}rn\u{a}uceanu \cite {LA} investigated the second minimum(or maximum) value of the number of cyclic subgroups of finite $p$-groups. Let $c(G)$ denote the number of all cyclic subgroups of $G$ and $c_m(G)$ denote the number of cyclic-subgroups of $G$ of order $p^m$. They \cite {LA}  proved that
$ c(G) \leq \frac { p^n + p^2 - p - 1 + (p - 1)^2 c_1(G)}{p^2 - p}$.  In \cite {ME1}, Meng and Lu also determined the lower bound of $c(G)$ for finite non-cyclic nilpotent groups.

\vskip 0.3cm

In the light of above investigations, in this paper,  we  consider the number of   non-cyclic subgroups of finite $p$-groups. For convenience, write  $\delta_m(G)$ for the number of non-cyclic subgroups of $G$ of order $p^m$ and  $\delta(G)$ for  the number of all non-cyclic subgroups of $G$. Our first goal is to give the upper bound of $\delta(G)$ for finite $p$-groups $G$. We prove the following results.

 \begin{thm}
Let $G$ be a group of order $p^n$. Then
\begin{center}
 $\delta(G)\leq \sum\limits_{k=2}^n\left [\begin{array}{c}
                                                                              n \\
                                                                              k
                                                                            \end{array}\right ]_p$,
 \end{center}
and the equality holds if and only if $G$ is an elementary abelian $p$-group of order $p^n$.
\end{thm}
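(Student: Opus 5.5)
Since no subgroup of order $p^0=1$ or $p$ is non-cyclic, we have
\[
\delta(G)=\sum_{k=2}^{n}\delta_k(G),\qquad \delta_k(G)\le s_k(G).
\]
The plan is to bound each term by Fan's theorem quoted in the introduction. For $2\le k<n$ it gives $s_k(G)\le \left[\begin{array}{c} n\\ k\end{array}\right]_p$. For $k=n$ we have $\delta_n(G)\le 1=\left[\begin{array}{c} n\\ n\end{array}\right]_p$, since $G$ is the only subgroup of order $p^n$. Summing over $k$ gives
\[
\delta(G)\le \sum_{k=2}^{n}s_k(G)\le \sum_{k=2}^{n}\left[\begin{array}{c} n\\ k\end{array}\right]_p .
\]

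For the equality case, first suppose $G\cong C_p^n$. Every subgroup of order $p^k$ with $k\ge 2$ is elementary abelian of rank $k$, hence non-cyclic. The number of such subgroups is exactly the Gaussian binomial $\left[\begin{array}{c} n\\ k\end{array}\right]_p$, so equality holds.

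Conversely, suppose equality holds. Then every inequality in the chain above must be an equality.

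1. If $n\ge 3$, then $k=2$ is a value with $2\le k<n$. Equality forces $s_2(G)=\left[\begin{array}{c} n\\ 2\end{array}\right]_p$, and Fan's characterization then gives that $G$ is elementary abelian.

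2. If $n=2$, the sum reduces to the single term $\delta_2(G)=1$. This says $G$ itself is non-cyclic, and a non-cyclic group of order $p^2$ is $C_p\times C_p$.

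3. If $n\le 1$, the sum is empty and $\delta(G)=0$. This is the degenerate case, in which $G$ is cyclic of prime order or trivial and hence trivially elementary abelian; I would note it or restrict to $n\ge 2$.

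I expect no real obstacle, since the argument rests entirely on Fan's result. The only care needed is with small $n$, where Fan's range $2\le m<n$ is empty, and with the $k=n$ term, which Fan does not cover and which is handled directly as above.
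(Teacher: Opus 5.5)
Your proof is correct and is essentially the paper's argument: the paper also reduces to the chain $\delta(G)\le\sum_{k=2}^n s_k(G)\le\sum_{k=2}^n\left[\begin{array}{c} n\\ k\end{array}\right]_p$ and settles equality by the equality case of Lemma 2.3(2), except that it first passes through Proposition 3.1, which here gives nothing beyond your trivial bound $\delta_k(G)\le s_k(G)$. Your direct version is actually preferable: Proposition 3.1 is stated only for odd $p$, and its step $s_1(G)\le\left[\begin{array}{c} n-m+1\\ 1\end{array}\right]_p$ is not justified for irregular groups, whereas your argument works for every prime and explicitly handles the $k=n$ term and the cases $n\le 2$, which Lemma 2.3(2) does not cover.
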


 Furthermore, we investigate an upper bound for $\delta(G)$ for finite non-elementary abelian $p$-groups $G$. Similar to the results of Qu \cite {QU2}, we have the following results.

 \begin{thm}
Assume $G$ is a group of order $ p^n$, where $p$ is an odd prime.
 If $G$ is not elementary abelian, then
\begin{center}
$\delta(G)\leq \delta(M_p(1, 1, 1) \times C_{p}^{n-3})$.
\end{center}
\end{thm}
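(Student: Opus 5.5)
Since every subgroup is either cyclic or non-cyclic, we have $\delta(G)=s(G)-c(G)$. So the theorem amounts to comparing $s(G)-c(G)$ with the same quantity for $\widetilde{G}=M_p(1,1,1)\times C_p^{n-3}$. Qu \cite{QU2} gives $s_k(G)\le s_k(\widetilde G)$ for every $k$. However, cyclic subgroups are subtracted, so this bound alone is not enough. The plan is to work order by order with $\delta_k(G)=s_k(G)-c_k(G)$. For $k=0$ and $k=1$ every subgroup is cyclic, so $\delta_0=\delta_1=0$ for all groups, and the task is to show
\[
\sum_{k\ge 2}\delta_k(G)\le \sum_{k\ge 2}\delta_k(\widetilde G).
\]

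\textbf{Case $\exp(G)\ge p^2$.} First note that $\exp(\widetilde G)=p$ because $p$ is odd, so $c_k(\widetilde G)=0$ for $k\ge 2$ and hence $\delta_k(\widetilde G)=s_k(\widetilde G)$ for $k\ge 2$. Therefore, for any $G$,
\[
\delta(G)=\sum_{k\ge2}\bigl(s_k(G)-c_k(G)\bigr)\le \sum_{k\ge 2}s_k(G)\le \sum_{k\ge2}s_k(\widetilde G)=\delta(\widetilde G),
\]
using Qu's inequality termwise. This argument covers every non-elementary-abelian $G$ of order $p^n$, whatever its exponent, since the subtracted terms $c_k(G)$ are nonnegative.

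\textbf{Case $\exp(G)=p$, $G$ nonabelian.} This case is already included in the computation above. Qu's theorem applies to all non-elementary-abelian $G$, so nothing separate needs checking.

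The only point needing care is the hypothesis that $p$ is odd. It is used to guarantee that $M_p(1,1,1)$ has exponent $p$, so that $\widetilde G$ has no cyclic subgroups of order $\ge p^2$ and $\delta_k(\widetilde G)=s_k(\widetilde G)$ for $k\ge2$. One should also confirm that Qu's result $s_k(G)\le s_k(\widetilde G)$ for $1\le k\le n$ is stated for arbitrary non-elementary-abelian $G$ of order $p^n$ with $p>2$, as quoted in the introduction; that is the main dependency. For $p=2$ this argument fails, because $D_8$ contains a cyclic subgroup of order $4$, and a finer comparison would be needed there.
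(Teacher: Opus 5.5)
Your proof is correct and follows the paper's own argument: bound $\delta(G)\le\sum_{k\ge2}s_k(G)$, apply Qu's inequality (Lemma 2.4) termwise, and use that $\widetilde G$ has exponent $p$ for odd $p$, so $\delta(\widetilde G)=\sum_{k\ge2}s_k(\widetilde G)$. The only difference is that the paper gets the first bound from Proposition 3.1 and then drops the nonpositive term $\frac{p^{n-m+1}-p^n}{p^m-p^{m-1}}$, whereas you read it off directly from $\delta_k\le s_k$. Your route is cleaner, and also safer: Proposition 3.1 relies on $s_1(G)\le\frac{p^{n-m+1}-1}{p-1}$, which fails in general (for $C_3\wr C_3$ one has $n-m+1=3$, so the bound is $13$, but there are $22$ subgroups of order $3$).
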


Finally, for finite non-elementary abelian $2$-groups, we obtain the following result.

\begin{thm}
Assume $G$ is a group of order $2^n$.
  If $G$ is not elementary abelian, then
\begin{center}
$\delta(G)\leq \delta(D_8\times C_{2}^{n-3})$.
\end{center}
\end{thm}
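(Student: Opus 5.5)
Since $\delta(G)=s(G)-c(G)$, the plan is to compare $G$ with $D_8\times C_2^{n-3}$ level by level: $\delta_m(G)=s_m(G)-c_m(G)$ for each $2\le m\le n$, where $c_m$ counts cyclic subgroups of order $2^m$. Fan's bound is too weak for this, since $\delta_m(G)$ for a non-elementary abelian group is strictly below the elementary abelian value. I would therefore follow Qu's reduction, adapted to $p=2$ as T\u{a}rn\u{a}uceanu did. Pick a normal subgroup $N$ of order $2$ in $G$, so $N\le Z(G)$. The goal is $\delta(G)\le\delta(G/N\times C_2)$, and then induction on $n$.

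For the reduction step, split the non-cyclic subgroups $H$ of $G$ into two types. If $H\ge N$, then $H/N$ is a subgroup of $G/N$, and $H$ is cyclic only if $H/N$ is cyclic. If $H\cap N=1$, then $HN=H\times N$ corresponds to a subgroup $HN/N\cong H$, and distinct such $H$ with the same $HN$ are complements to $N$ in $HN$. Counting complements of a central subgroup of order $2$ in a group $K\ge N$ gives either $0$ or $|\mathrm{Hom}(K/N,N)|=|K/N:\Phi(K/N)|$. The same bookkeeping applied to $\overline{G}=G/N\times C_2$, with $N$ the new $C_2$ factor, gives the maximal possible counts. The cyclic subgroups also need care, because a cyclic $H/N$ can lift to a non-cyclic $H$. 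I would write $\delta(G)=\sum_{K\ge N}\bigl(\epsilon(K)+\#\{\text{non-cyclic complements of }N \text{ in } K\}\bigr)$, where $\epsilon(K)=1$ if $K$ is non-cyclic. Then I would check termwise that this is at most the corresponding quantity for $K/N\times C_2$, in which every $K$ lifting a non-trivial $K/N$ is non-cyclic and the complement count is maximal. The delicate point is cyclic $K/N$. In $\overline G$ it contributes $\epsilon=1$ plus two cyclic complements. In $G$ it may contribute one non-cyclic $K$ (a generalized quaternion or $C_{2^a}\times C_2$, etc.) plus at most two complements, and those complements are non-cyclic only if they are non-cyclic in $\overline G$ as well. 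So the inequality holds.

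Next comes the induction. If $G/N$ is not elementary abelian, apply induction to $G/N$ of order $2^{n-1}$ together with $\delta(X\times C_2)$ monotonicity. Monotonicity follows by the same termwise formula, since $\delta(X\times C_2)$ is determined by subgroup counts of $X$ that dominate. This gives $\delta(G)\le\delta(D_8\times C_2^{n-4}\times C_2)$. If $G/N$ is elementary abelian for \emph{every} choice of $N$, then $\Phi(G)=N$ has order $2$ and $\overline G$ is elementary abelian, so the bound is useless. This is the main obstacle. Here $G$ is either abelian, namely $C_4\times C_2^{n-2}$, or $G=E\ast Z$ with $E$ extraspecial times an elementary abelian part, possibly with a $C_4$ centre. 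For these I would compute $\delta$ directly. Subgroups of such groups correspond to subspaces of $V=G/\Phi(G)$ together with the quadratic form $q$ given by squaring, and a subgroup with $|H|\ge 4$ is cyclic only if it is a $C_4$. So $\delta(G)=s(G)-1-s_1(G)-c_2(G)$, and it suffices to maximize $s(G)-s_1(G)-c_2(G)$, that is, to minimize the number of singular-type obstructions. I expect $D_8\times C_2^{n-3}$, where $q$ has the fewest non-singular vectors and the form has radical of codimension $2$, to be extremal by T\u{a}rn\u{a}uceanu's computation of $s(G)$ for these groups. I would verify the remaining comparison by explicit Gaussian-binomial formulas in the number of vectors $v$ with $q(v)\neq0$.
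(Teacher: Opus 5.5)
\textbf{Gap 1: the induction step.} Your one-step reduction $\delta(G)\le\delta(G/N\times C_2)$ is correct, and your complement count is a cleaner proof of it than the paper's Proposition 3.2. The problem is how you chain it. You induct on $n$ and combine $\delta(G/N)\le\delta(D_8\times C_2^{n-4})$ with a ``monotonicity'' of $X\mapsto\delta(X\times C_2)$. Your own termwise formula shows
\[
\delta(X\times C_2)=s(X)-1+\sum_{K\le X,\ K\ \text{non-cyclic}}2^{d(K)},
\]
and this is not determined by $\delta(X)$. The inductive hypothesis controls only $\delta(G/N)$. Nothing you cite bounds this weighted sum for $G/N$ by the corresponding sum for $D_8\times C_2^{n-4}$. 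The inequality you need is in fact true, but as it stands its only justification is the theorem itself, applied to the order-$2^n$ group $G/N\times C_2$, which is what you are proving.

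The repair is never to leave order $2^n$. Apply your reduction again to $G/N\times C_2$ itself, using a central subgroup of order $2$ of the $G/N$ factor. Iterating along a chief series gives $\delta(G)\le\delta(G/M\times C_2^r)$ for every normal $M$ of order $2^r$; this is the paper's Theorem 3.3, and no monotonicity is needed. For instance, take $M\trianglelefteq G$ of index $2$ in $\Phi(G)$. Then $\Phi(G/M)=\Phi(G)/M$ has order $2$, so $H=G/M\times C_2^r$ is non-elementary abelian of order $2^n$ and exponent $4$. The paper instead splits according to whether $G/G'$ is elementary abelian.

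\textbf{Gap 2: the final comparison.} You leave this as an expectation, and T\u{a}rn\u{a}uceanu's bound on the total $s(G)$ cannot give it, because you subtract $s_1+c_2$. What closes it is the level-wise bound $s_k(H)\le s_k(D_8\times C_2^{n-3})$ for all $k$ (Lemma 2.5). For exponent at most $4$, counting elements gives $2c_2(H)=2^n-1-s_1(H)$, so
\[
\delta(H)=\sum_{k\ge 2}s_k(H)+\tfrac12 s_1(H)-\tfrac12(2^n-1).
\]
This is increasing in every $s_k$, so the comparison with $D_8\times C_2^{n-3}$ is immediate. No quadratic-form or Gaussian-binomial computation is required. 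The paper reaches the same point differently: it uses $\delta_k=s_k$ for $k\ge3$, and handles $\delta_2$ by a Goursat count of Klein four-subgroups using Lemmas 2.5 and 2.8.
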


 All unexplained notation and terminologies
    are standard and can be found in \cite {BE2, SU}. In addition, $C_n$  denotes the cyclic group of order $n$;     $A\times B$ means a direct product of $A$ and $B$; $T^d$ denotes the direct product of  $d$ groups $T$; $A* B$ denotes the (amalgamated)
central product of the groups $A$ and $B$ having isomorphic centres, and $X^{*r}$
denotes the central product of $r$ copies of the group $X$.

\section{Preliminaries}
In this section, we collect some results which will be used in
the proof of the main results. For convenience,
 we list some known results about the number
$\left [\begin{array}{c}
n \\
m
\end{array}
\right ]_p$
 of subgroups of order $p^m$ of elementary abelian p-groups of order $p^n$.
 \begin{lem} \cite {BE2}
 $(1)$\[
\left [
\begin{array}{c}
n \\
m
\end{array}
\right ]_p =
\left\{
\begin{array}{ll}
\frac{(p^n - 1)(p^{n - 1} - 1)\cdots(p^{n - m + 1} - 1)}{(p^m - 1)(p^{m - 1} - 1)\cdots (p - 1)}, & n > m > 0 \\
1, & m < 0
\end{array}
\right.
\]

$(2)$ If $n \geq m$, then
$\left [\begin{array}{c}
n \\
m
\end{array}
\right ]_p$=
$\left [\begin{array}{c}
n \\
n-m
\end{array}
\right ]_p$

$(3)$ For $n$ and $m$,
$\left [\begin{array}{c}
n+1 \\
m
\end{array}
\right ]_p$=
$\left [\begin{array}{c}
n \\
m
\end{array}
\right ]_p + p^{n-m+1}\left [\begin{array}{c}
n \\
m-1
\end{array}
\right ]_p$
 \end{lem}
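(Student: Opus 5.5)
The plan is to work throughout with the identification of an elementary abelian $p$-group of order $p^n$ with the $n$-dimensional vector space $V=\mathbb{F}_p^n$. Its subgroups of order $p^m$ are then exactly the $m$-dimensional subspaces, so each assertion becomes a statement about counting subspaces. The degenerate cases will be fixed by the natural convention: the count is $1$ when $m=0$ or $m=n$, and the stated value for $m<0$ is just the convention adopted in the paper for boundary terms. I will also note that the recurrence in (3) is only needed, and only claimed, for $1\le m\le n+1$.

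For (1), I will count pairs (ordered basis, subspace) in two ways. The number of ordered $m$-tuples of linearly independent vectors in $V$ is
\[
(p^n-1)(p^n-p)\cdots(p^n-p^{m-1}),
\]
since the $i$-th vector must avoid the $p^{i-1}$ elements of the span of the previous ones. Each $m$-dimensional subspace $U$ arises from exactly $(p^m-1)(p^m-p)\cdots(p^m-p^{m-1})$ such tuples, namely its own ordered bases. Dividing these two counts and cancelling the factor $p^{0+1+\cdots+(m-1)}$ from numerator and denominator gives the displayed quotient.

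For (2), I will give a bijective argument rather than manipulating the formula. Fix the standard nondegenerate bilinear form on $V$. The map $U\mapsto U^{\perp}$ is then an inclusion-reversing bijection from $m$-dimensional subspaces to $(n-m)$-dimensional ones, because $\dim U^\perp=n-\dim U$ and $U^{\perp\perp}=U$. Alternatively, in the formula of (1), multiplying numerator and denominator by $\prod_{i=1}^{n-m}(p^i-1)$ rewrites both sides as $\prod_{i=1}^{n}(p^i-1)\big/\bigl(\prod_{i=1}^{m}(p^i-1)\prod_{i=1}^{n-m}(p^i-1)\bigr)$, which is visibly symmetric in $m$ and $n-m$.

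For (3), let $\dim V=n+1$ and fix a hyperplane $H$ with $\dim H=n$. I will split the $m$-dimensional subspaces $U$ of $V$ into two classes.
\begin{itemize}
\item Those with $U\le H$: there are $\left[\begin{array}{c} n \\ m \end{array}\right]_p$ of them.
\item Those with $U\not\le H$: here $W=U\cap H$ has dimension $m-1$ and lies in $H$.
\end{itemize}
For the second class, fix such a $W$. The subspaces $U\supseteq W$ of dimension $m$ with $U\not\le H$ correspond to $1$-dimensional subspaces of $V/W$ (of dimension $n-m+2$) not contained in $H/W$ (of dimension $n-m+1$). By (1) with $m=1$ their number is
\[
\frac{p^{n-m+2}-1}{p-1}-\frac{p^{n-m+1}-1}{p-1}=p^{n-m+1}.
\]
Summing over the $\left[\begin{array}{c} n \\ m-1 \end{array}\right]_p$ choices of $W$ gives the recurrence.

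The only real obstacle is bookkeeping at the boundary, namely $m=1$, where $W=0$, and $m=n+1$, where $U=V$. I will check these directly against the conventions: at $m=1$ the count of $W$ is $1$, and at $m=n+1$ the first class is empty and $p^{0}=1$, so the recurrence still holds.
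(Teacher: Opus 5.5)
Your proof is correct, but there is no argument in the paper to compare it with. The paper states this lemma without proof and quotes these standard facts about Gaussian binomial coefficients from Berkovich's book.

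What you supply is the standard self-contained argument:
\begin{itemize}
\item double counting ordered bases for (1);
\item the bijection $U\mapsto U^{\perp}$, or the symmetric product formula, for (2);
\item splitting the $m$-subspaces of $\mathbb{F}_p^{n+1}$ by whether they lie in a fixed hyperplane $H$ for (3). Here the count $p^{n-m+1}$ correctly comes from $\dim V/W=n-m+2$ and $\dim H/W=n-m+1$.
\end{itemize}
The citation buys brevity. Your proof makes the boundary conventions explicit, and one of your remarks about them should be sharpened.

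The clause ``$1$, $m<0$'' in the statement is almost certainly a misprint for the usual convention: value $1$ at $m=0$ and $m=n$, value $0$ for $m<0$ or $m>n$. Read literally, (3) is false for every $m\le 0$ (taking the natural value $1$ at $m=0$), because it becomes $1=1+p^{n-m+1}$. So restricting (3) to $1\le m\le n+1$ is not merely what is ``needed'' or ``claimed''; the literal statement forces it. Under the standard convention, the recurrence holds for every integer $m$, trivially outside $1\le m\le n+1$.

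Your check at $m=n+1$ also tacitly uses $\left[\begin{array}{c} n \\ n+1 \end{array}\right]_p=0$. This is right under the counting interpretation, and also under the product formula, which then contains the factor $p^{0}-1$. However, the lemma as written does not cover this value, so you should state it explicitly.
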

 \begin{lem}  \cite{BE2}
 Assume $\vert G \vert = p^n$, $1 < m < n$. If $s_m(G)=1$, then $G$ is cyclic.
 \end{lem}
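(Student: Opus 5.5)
The plan is to show that the unique subgroup of order $p^m$ forces $G$ to have a unique subgroup of order $p$. I will then invoke the classical classification of such $p$-groups (cyclic or generalized quaternion) and rule out the quaternion case using $m\ge 2$.

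First, let $H$ be the unique subgroup of order $p^m$; it is normal in $G$. Since $m<n$, subgroups of order $p^{m+1}$ exist. I use the standard fact that in a $p$-group every subgroup of order $p^k$ is contained in a subgroup of order $p^{j}$ for each $k\le j\le n$. Hence every subgroup $K$ of order $p^{m+1}$ contains $H$. Moreover, $H$ is the only maximal subgroup of $K$.

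Second, I argue that such a $K$ must be cyclic. If $K$ were non-cyclic, then $K/\Phi(K)$ would be elementary abelian of rank at least $2$. Then $K$ would have at least $p+1$ maximal subgroups, which is a contradiction. So every subgroup of order $p^{m+1}$ is cyclic. In particular $H$, being a subgroup of such a $K$, is cyclic.

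Third, let $L$ be any subgroup of order $p^k$ with $k\le m$. Then $L$ lies in some subgroup $K$ of order $p^{m+1}$. Since $K$ is cyclic, its unique subgroup of order $p^m$ is $H$, and $L\le H$. Thus all subgroups of order at most $p^m$ lie in the cyclic group $H$. Consequently $G$ has exactly one subgroup of order $p$, and (since $m\ge 2$) exactly one subgroup of order $p^2$.

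Finally, a $p$-group with a unique subgroup of order $p$ is cyclic or generalized quaternion, and the latter occurs only for $p=2$. So it remains to exclude $G\cong Q_{2^n}$ with $n\ge 3$. Such a group contains a subgroup isomorphic to $Q_8$, which has three distinct cyclic subgroups of order $4$. This contradicts the uniqueness of the subgroup of order $p^2=4$ obtained in the third step. Hence $G$ is cyclic.

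The step requiring the most care is the second one, where the count of maximal subgroups via the Frattini quotient is used. This is where the hypothesis $m<n$ enters, through the existence of $K$. The hypothesis $m>1$ is used in the final step to exclude the generalized quaternion groups.
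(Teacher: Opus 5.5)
Your proof is correct and complete. Its steps are: the unique-maximal-subgroup argument makes every subgroup of order $p^{m+1}$ cyclic, which puts every subgroup of order at most $p^m$ inside the cyclic group $H$; Burnside's classification then leaves cyclic or generalized quaternion groups, and the quaternion case is ruled out by $m\ge 2$ and the three cyclic subgroups of order $4$ in $Q_8\le Q_{2^n}$.

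The paper gives no proof of this lemma; it only cites Berkovich's book, and your argument is essentially the standard textbook one. One cosmetic point: in your first step, $H\le K$ follows from $K$ having a subgroup of order $p^m$, not from the upward-containment fact, which you actually need only in the third step.
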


 \begin{lem} \cite {BE2, FA}
 Assume $G$ is a group of order $p^n$.  Then

 (1) $s_1(G) \leq$
 $\left [\begin{array}{c}
n \\
1
\end{array}
\right ]_p$,
 in particular, the equality holds if and only if $\exp(G) = p$.

 (2)  If $1 < k < n$, then $s_k(G) \leq$
$\left [\begin{array}{c}
n \\
k
\end{array}
\right ]_p$,
the equality holds if and only if $G$ is
an elementary abelian $p$-group.
\end{lem}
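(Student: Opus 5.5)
We prove part (1) by counting elements, and part (2) by induction on $n$, removing a central subgroup of order $p$ and using the recursion of Lemma 2.1(3).

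\textbf{Part (1).} Every subgroup of order $p$ contains exactly $p-1$ elements of order $p$, and distinct such subgroups share only the identity. Hence
\[
s_1(G)=\frac{\#\{x\in G: o(x)=p\}}{p-1}\le \frac{p^n-1}{p-1}=\left[\begin{array}{c} n\\ 1\end{array}\right]_p .
\]
Equality holds exactly when every non-identity element has order $p$, that is, when $\exp(G)=p$.

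\textbf{Part (2): the inequality.} We induct on $n$, proving the inequality for all $1\le k\le n$. The cases $k=1$ (part (1)) and $k=n$ are immediate. Fix $1<k<n$ and choose $N\le Z(G)$ with $|N|=p$. Split the subgroups $K$ of order $p^k$ into two kinds.
\begin{itemize}
\item Those with $N\le K$. These correspond bijectively to subgroups of order $p^{k-1}$ in $G/N$. By induction there are at most $\left[\begin{array}{c} n-1\\ k-1\end{array}\right]_p$ of them.
\item Those with $N\not\le K$. Since $N$ is central, $L=KN$ has order $p^{k+1}$, contains $N$, and $K$ is a maximal subgroup of $L$ not containing $N$. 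The number of possible $L$ is $s_k(G/N)\le \left[\begin{array}{c} n-1\\ k\end{array}\right]_p$ by induction.
\end{itemize}
For a fixed $L$, we bound the number of maximal subgroups not containing $N$. Maximal subgroups of $L$ are preimages of hyperplanes of $V=L/\Phi(L)$, and $\dim V=d(L)\le k+1$. If $N\le\Phi(L)$ there are none. Otherwise $N$ maps to a nonzero vector of $V$, and exactly $p^{d(L)-1}\le p^k$ hyperplanes avoid it.

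Combining the two kinds of subgroups gives
\[
s_k(G)\le \left[\begin{array}{c} n-1\\ k-1\end{array}\right]_p+p^k\left[\begin{array}{c} n-1\\ k\end{array}\right]_p .
\]
Lemma 2.1(3), applied together with the symmetry in Lemma 2.1(2), shows that the right-hand side equals $\left[\begin{array}{c} n\\ k\end{array}\right]_p$.

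\textbf{Part (2): the equality case.} This is the delicate step. If $G$ is elementary abelian, equality is classical. Conversely, suppose equality holds. Then every estimate above is sharp. In particular, every $L\ge N$ of order $p^{k+1}$ satisfies $d(L)=k+1$, so every such $L$ is elementary abelian.

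Now use that $k\ge 2$.
\begin{itemize}
\item For any $x\in G$, the subgroup $\langle x,N\rangle$ has order at most $p^{k+1}$. So it lies in some subgroup $L\ge N$ of order exactly $p^{k+1}$, which exists by Sylow-type counting in $G/N$. Hence $x^p=1$.
\item For any $x,y\in G$, the subgroup $\langle x,y,N\rangle/N$ has order at most $p^2\le p^k$ modulo the quotient structure. To see this, note first that sharpness at the first bullet point (the bound for $G/N$ at level $k-1$) forces $G/N$ to be elementary abelian when $k-1>1$, and to have exponent $p$ when $k-1=1$. Either way $\langle x,y,N\rangle$ has order at most $p^3\le p^{k+1}$, so it sits inside some elementary abelian $L$ as before. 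Hence $[x,y]=1$.
\end{itemize}
So $G$ is elementary abelian.

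The main obstacle is this last step: guaranteeing that the small subgroups $\langle x,N\rangle$ and $\langle x,y,N\rangle$ really are contained in some $L\ge N$ of order exactly $p^{k+1}$. This should follow because every subgroup of a $p$-group lies in a subgroup of each larger order.
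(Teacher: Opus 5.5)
The paper gives no proof of this lemma; it is quoted from Berkovich and Fan, so your argument has to stand on its own.

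\textbf{What is correct.} Part (1) is correct. So is the inequality in part (2). The recursion you need, $\left[\begin{array}{c} n\\ k\end{array}\right]_p=\left[\begin{array}{c} n-1\\ k-1\end{array}\right]_p+p^k\left[\begin{array}{c} n-1\\ k\end{array}\right]_p$, is exactly Lemma 2.1(3) applied to $\left[\begin{array}{c} n\\ n-k\end{array}\right]_p$ followed by symmetry. Your extraction of the consequences of equality is also correct.

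\textbf{Where the gap is.} The gap is in the converse of the equality case, and not where you located it. The step you flagged is routine: a proper subgroup of a $p$-group is properly contained in its normalizer, so any subgroup of order at most $p^{k+1}$ lies in one of order exactly $p^{k+1}$. The real problems are the order bounds.

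\begin{itemize}
\item In the first bullet you assert $|\langle x,N\rangle|\le p^{k+1}$ before anything bounds $o(x)$. This only becomes true once you know $x^p\in N$, which requires information about $G/N$.
\item More seriously, in the second bullet with $k=2$ you only know that $G/N$ has exponent $p$. For odd $p$ this does not give $|\langle x,y,N\rangle|\le p^3$. A two-generator group of exponent $p$ can have order $p^3$ (e.g.\ $M_p(1,1,1)$) or more. Nothing you have established prevents $\langle x,y,N\rangle$ from having order $p^4>p^{k+1}$. In that case it lies in no $L$ of order $p^{k+1}$, and you cannot conclude $[x,y]=1$.
\end{itemize}

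\textbf{The repair.} Use a sharpness condition you derived but did not exploit: equality also forces $s_k(G/N)=\left[\begin{array}{c} n-1\\ k\end{array}\right]_p$.
\begin{itemize}
\item If $k\le n-2$, then $1<k<n-1$, and the induction hypothesis makes $G/N$ elementary abelian.
\item If $k=n-1$, then $G$ itself is the unique subgroup of order $p^{k+1}$ containing $N$. Your conclusion that every such $L$ is elementary abelian already finishes the proof.
\end{itemize}
Once $G/N$ is elementary abelian, $\Phi(G)\le N$. Hence $|\langle x,y,N\rangle|\le p^3\le p^{k+1}$ for all $x,y\in G$, and your containment argument gives $x^p=1$ and $[x,y]=1$ simultaneously.

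For $k\ge 3$ your own route through level $k-1$ also works. For $p=2$, exponent $2$ already forces commutativity. So the failure is confined to $k=2$ with $p$ odd, but as written the equality case is not proved there.
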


\begin{lem} \cite {QU2}
Assume $G$ is a group of order $ p^n$, where $p$ is an odd prime, and
$\widetilde{G}= M_p(1, 1, 1) \times C_{p}^{n-3}$. If $G$ is non-elementary abelian, then $\forall 1\leq k \leq  n$,
$s_k(G) \leq s_k(\widetilde{G})$. In particular,  if $2 \leq k \leq n-2$, then $s_k(G) < s_k( \widetilde{G})$.
\end{lem}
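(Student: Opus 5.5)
The plan is to reduce everything to groups with Frattini subgroup of order $p$, using a one-step comparison inequality, and then to compare those groups by direct computation.

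\textbf{Step 1 (one-step inequality).} Let $N\trianglelefteq G$ with $|N|=p$. I claim that for all $k$,
\[
s_k(G)\le s_k(G/N\times C_p).
\]
Split the subgroups $H$ of order $p^k$ into two kinds.
\begin{itemize}
\item If $H\ge N$, then $H$ corresponds to a subgroup of $G/N$ of order $p^{k-1}$.
\item If $H\not\ge N$, put $K=HN$. Then $K/N$ has order $p^k$ and $H$ is a maximal subgroup of $K$ not containing $N$.
\end{itemize}
In the second case, maximal subgroups of $K$ not containing $N$ exist only if $N\cap\Phi(K)=1$. In that situation $d(K)=d(K/N)+1$, and there are exactly $p^{d(K)}-p^{d(K)-1}\cdot\frac{p^{d(K)-1}-1}{p^{d(K)-1}-1}$, that is at most $p^{d(K/N)}$, such subgroups. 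Hence
\[
s_k(G)\le s_{k-1}(G/N)+\sum_{\substack{K/N\le G/N\\ |K/N|=p^k}}p^{d(K/N)}.
\]
For a direct product $X\times C_p$, the same count is exact: the subgroups of order $p^k$ are $s_{k-1}(X)$ plus, for each $L\le X$ of order $p^k$, the $p^{d(L)}$ complements to $C_p$ in $L\times C_p$. So the right-hand side equals $s_k(G/N\times C_p)$.

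\textbf{Step 2 (iteration).} Since $G$ is not elementary abelian, $\Phi(G)\ne1$. While $|\Phi(G)|\ge p^2$, choose $N\le\Phi(G)\cap Z(G)$ of order $p$ and replace $G$ by $G_1=G/N\times C_p$. Then $|G_1|=|G|$ and $\Phi(G_1)=\Phi(G)/N$ is smaller but still nontrivial, so $G_1$ is again not elementary abelian. Iterating Step 1 gives $s_k(G)\le s_k(G_0)$ for some $G_0$ of order $p^n$ with $|\Phi(G_0)|=p$.

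\textbf{Step 3 (the case $|\Phi(G_0)|=p$).} Here $G_0'\le\Phi(G_0)$ and $\mho_1(G_0)\le\Phi(G_0)$, both of order at most $p$. Since $p$ is odd, $G_0$ is a central product of an extraspecial group of exponent $p$ or $p^2$, or of $C_{p^2}$, with an elementary abelian group. So up to isomorphism $G_0$ is one of:
\begin{itemize}
\item $C_{p^2}\times C_p^{n-2}$;
\item $E\times C_p^{m}$ with $E$ extraspecial;
\item $E*C_{p^2}\times C_p^{m}$.
\end{itemize}
For each, I would compute $s_k$ through the formula of Step 1 applied with $N=\Phi(G_0)$. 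Then $G_0/N\cong C_p^{n-1}$, and the whole count is governed by how many $K/N$ of order $p^k$ split over $N$. A subgroup $K/N$ splits over $N$ precisely when $K$ is elementary abelian, i.e. when $K/N$ is totally isotropic for the commutator form and also isotropic for the $p$-power map.

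\textbf{Step 4 (comparison).} The count in Step 3 is largest when the form and the power map are as degenerate as possible. That is the case of $M_p(1,1,1)\times C_p^{n-3}$: rank-$2$ commutator form and trivial $p$-power map. For $C_{p^2}\times C_p^{n-2}$ there are more non-split $K$ when $k\ge2$. I would verify this using the Gaussian-binomial identities of Lemma 2.1(3), and show strict inequality for $2\le k\le n-2$ by exhibiting, in $\widetilde G$, a subgroup of order $p^k$ not matched in the other groups.

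I expect Step 4 to be the main obstacle, since it requires counting isotropic subspaces of $C_p^{n-1}$ for higher-rank forms and comparing them with the rank-$2$ case. I would first try to handle it inductively: peel off a hyperbolic plane, compare the resulting recurrences, and show that raising the rank or adding a nontrivial power map never increases the count.
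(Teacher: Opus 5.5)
\textbf{Verdict.} Steps 1--3 are sound, but the proof is incomplete: Step 4, the comparison that carries all the content, is only sketched, and the strict inequality is not handled.

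\textbf{Steps 1--3.} The paper does not prove this lemma; it quotes it from Qu. The only ingredient it records is Qu's reduction $s_k(G)\le s_k(G/N\times C_p)$, which is exactly your Step 1. One correction in Step 1: if $N\not\le\Phi(K)$ and $d=d(K)$, the number of maximal subgroups of $K$ avoiding $N$ is $\frac{p^{d}-1}{p-1}-\frac{p^{d-1}-1}{p-1}=p^{d-1}=p^{d(K/N)}$ exactly. Your expression equals $p^{d}-p^{d-1}$, which exceeds $p^{d(K/N)}$ for odd $p$.

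\textbf{The gap in Step 4.} The claim is supported only by the heuristic ``as degenerate as possible''. With $N=\Phi(G_0)$ you have $s_k(G_0)=\left[\begin{smallmatrix}n-1\\ k-1\end{smallmatrix}\right]_p+p^k g_k$. Here $g_k$ counts the $k$-subspaces of $V=G_0/N$ that are totally isotropic and lie in $\ker f$, where $f$ is the $p$-power map (linear because $p$ is odd). For $\widetilde G$,
\[g_k=(p+1)\left[\begin{smallmatrix}n-2\\ k\end{smallmatrix}\right]_p-p\left[\begin{smallmatrix}n-3\\ k\end{smallmatrix}\right]_p.\]
So every case with $f\neq0$ follows at once from $g_k\le\left[\begin{smallmatrix}n-2\\ k\end{smallmatrix}\right]_p$.

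What remains unproved is $E\times C_p^{m}$ with $E$ of exponent $p$ and order $p^{2r+1}$, $r\ge2$. Let $T(N,r,k)$ be the number of totally isotropic $k$-subspaces for an alternating form of rank $2r$ on $\mathbb{F}_p^N$. You need $T(N,r,k)\le T(N,1,k)$.

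A naive comparison after killing a hyperbolic plane fails: $\langle e_1+e_2,f_1-f_2\rangle$ is totally isotropic for the full form but not for the degenerated one. So a genuine recurrence is needed. One that works is to double count pairs $(x,W)$ with $0\ne x\in W$:
\[(p^k-1)\,T(N,r,k)=(p^{N-2r}-1)\,T(N-1,r,k-1)+(p^{N}-p^{N-2r})\,T(N-2,r-1,k-1).\]
For $r=1$ the last factor is $\left[\begin{smallmatrix}N-2\\ k-1\end{smallmatrix}\right]_p$. To finish:
\begin{itemize}
\item Split $p^N-p^{N-2r}=(p^{N-2}-p^{N-2r})+(p^N-p^{N-2})$.
\item Bound $T(N-1,r,k-1)$ by $T(N-1,1,k-1)$, using induction on $N$.
\item On the $(p^{N-2}-p^{N-2r})$-part, use $T(N-2,r-1,k-1)\le T(N-2,1,k-1)\le T(N-1,1,k-1)$.
\item On the $(p^N-p^{N-2})$-part, use the trivial bound $\left[\begin{smallmatrix}N-2\\ k-1\end{smallmatrix}\right]_p$.
\end{itemize}
The result is exactly $(p^k-1)T(N,1,k)$.

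\textbf{Strictness.} As literally stated, the strict inequality fails for $G=\widetilde G$ when $n\ge4$, so it can only mean $G\not\cong\widetilde G$. But your Step 2 can carry such a $G$ onto $G_0\cong\widetilde G$. For example, with $G=M_p(1,1,1)\times C_{p^2}\times C_p^{n-5}$ and $N=\Phi(C_{p^2})$, one step gives $G/N\times C_p\cong\widetilde G$. So exhibiting ``a subgroup of $\widetilde G$ not matched in the other groups'' says nothing about such $G$.

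Strictness must then come from a strict instance of Step 1. By your own count, the defect there is $\sum p^{d(K/N)}$ over those $K\ge N$ of order $p^{k+1}$ with $N\le\Phi(K)$. You would have to show that at some step of the chain, for a suitable choice of $N$, such a $K$ exists for every $2\le k\le n-2$. Your proposal does not address this.
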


\begin{lem} \cite {TA}
Let $G$ be a finite non-elementary abelian $2$-group of order $2^n$($n\geq 3$). Then
$s_k(G)\leq s_k(D_8\times  C_{2}^{n-3})$, $\forall 0\leq k\leq n$.
\end{lem}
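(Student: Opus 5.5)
The plan is to prove this by a reduction argument in the style of Qu \cite{QU2}, using a central subgroup of order $2$, and then to settle a base case: groups whose Frattini subgroup has order $2$. First take $N\le Z(G)$ with $|N|=2$. For a subgroup $H$ of order $2^k$ there are two cases. If $N\le H$, then $H$ corresponds to a subgroup of order $2^{k-1}$ of $G/N$. If $N\not\le H$, then $H$ is a complement of $N$ in the preimage of $K=HN/N$, which has order $2^k$. For fixed $K$, the complements of $N$ in its preimage form either the empty set or a coset of $\mathrm{Hom}(K,N)$. So their number is at most $|K/\Phi(K)|$. In $G/N\times C_2$ the corresponding counts are attained exactly. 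Hence
\[
s_k(G)\le s_k(G/N\times C_2)\quad\text{for all }k .
\]

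Now assume $G$ is not elementary abelian, so $\Phi(G)\neq 1$. Choose $N\le \Phi(G)\cap Z(G)$ of order $2$, which exists since $\Phi(G)$ is a nontrivial normal subgroup. Put $G_1=G/N\times C_2$. Then $\Phi(G_1)=\Phi(G)/N$, so $G_1$ is non-elementary abelian as long as $|\Phi(G)|>2$. Iterating, we get $s_k(G)\le s_k(G^*)$, where $G^*$ has order $2^n$ and $|\Phi(G^*)|=2$. It therefore suffices to prove the lemma for groups with $|\Phi(G)|=2$.

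For such $G$, set $V=G/\Phi(G)\cong C_2^{n-1}$. The squaring map $q(x)=x^2\in\Phi(G)\cong\mathbb F_2$ is a quadratic form on $V$; it is nonzero, since otherwise $\Phi(G)=\mho_1(G)G'$ would be trivial (for $p=2$ the commutators are polarizations of $q$). Subgroups of order $2^k$ containing $\Phi(G)$ number $\left[\begin{array}{c} n-1\\ k-1\end{array}\right]_2$. A subgroup $H$ of order $2^k$ not containing $\Phi(G)$ maps isomorphically onto a $k$-subspace $W$ of $V$. Such complements exist exactly when the preimage of $W$ is elementary abelian, that is, when $W$ is totally singular for $q$, and then there are $2^k$ of them. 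Thus
\[
s_k(G)=\left[\begin{array}{c} n-1\\ k-1\end{array}\right]_2+2^k\,t_k(q),
\]
where $t_k(q)$ is the number of totally singular $k$-subspaces of $q$. For $D_8\times C_2^{n-3}$ we have $q=x_1x_2$ up to equivalence. Its singular set is the union of the two hyperplanes $x_1=0$ and $x_2=0$, and a subspace is totally singular iff it lies in one of them. This gives
\[
t_k = 2\left[\begin{array}{c} n-2\\ k\end{array}\right]_2-\left[\begin{array}{c} n-3\\ k\end{array}\right]_2 .
\]

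The main obstacle is to show that $x_1x_2$ maximizes $t_k(q)$ among all nonzero quadratic forms on $\mathbb F_2^{n-1}$. I would argue through the standard classification: $q$ is equivalent to one of $x_1^2$, a sum of $r\ge1$ hyperbolic planes, or $r$ hyperbolic/elliptic pieces plus possibly $x_{2r+1}^2$, on the nondegenerate part, with a radical on which $q$ vanishes. Totally singular subspaces then split along the radical, so $t_k(q)$ can be written in terms of counts of totally singular subspaces of the nondegenerate part. The key monotonicity step is this: replacing a nondegenerate piece of larger rank by a single hyperbolic plane (or by $x_1x_2$) enlarges the radical and does not decrease $t_k$. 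Concretely, I would show that every totally singular $W$ for a form of rank at least $3$ injects into the totally singular subspaces of a form of smaller rank. A cruder route would also suffice: bound $t_k(q)$ by the number of $k$-subspaces inside the zero set of $q$. That zero set has at most $3\cdot 2^{n-3}$ points when $q$ is nonlinear, and $2^{n-2}$ points in the linear case $x_1^2$ (the $C_4\times C_2^{n-2}$ case), so the formula above would follow by induction on $n$ via Lemma 2.1(3). I would try the reduction-by-rank route first, and handle small $n$ and the $C_4$ case directly.
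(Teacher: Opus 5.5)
\textbf{There is a genuine gap at the decisive step.}

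Your reductions are fine. Counting complements of a central $N$ through $\mathrm{Hom}(K,N)$ gives $s_k(G)\le s_k(G/N\times C_2)$, and choosing $N\le\Phi(G)\cap Z(G)$ lets you descend to $|\Phi(G)|=2$. For such $G$ you then have $s_k(G)=\left[\begin{array}{c} m\\ k-1\end{array}\right]_2+2^k t_k(q)$ with $m=n-1$, and $t_k(x_1x_2)=2\left[\begin{array}{c} m-1\\ k\end{array}\right]_2-\left[\begin{array}{c} m-2\\ k\end{array}\right]_2$.

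But after these formal steps, the entire content of the lemma is the inequality $t_k(q)\le t_k(x_1x_2)$ for every nonzero quadratic form $q$ on $\mathbb{F}_2^m$. That is exactly what you leave unproved.

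The rank route is only announced. You never say what the injection from totally singular subspaces of a higher-rank form into those of a lower-rank form is, nor why it is injective for every $k$.

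The crude route fails as stated. By definition $t_k(q)$ is the number of $k$-subspaces inside $Z(q)$. The bound $|Z(q)|\le 3\cdot 2^{m-2}$ therefore settles only $k=1$, where $t_1(q)=|Z(q)|-1$. For $k\ge 2$ the number of $k$-subspaces inside a set depends on how the set is arranged, not just its size; bounding it by size alone is a Kruskal--Katona type extremal problem for subspaces. The proposed induction over hyperplanes via Lemma 2.1(3) also does not inherit its hypothesis: $Z(q)\cap H$ can be all of $H$, already for $q=x_1x_2$ and $H=\{x_1=0\}$.

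The paper does not prove this lemma; it quotes it from \cite{TA}. The argument there, whose pattern the proof of Theorem 1.3 copies, runs as follows: reduce via Qu's lemma to $C_4\times C_2^{n-2}$ or to $E\times A$, $(E*C_4)\times A$; count subgroups of the direct product by Goursat's lemma; then use the section bound of Lemma 2.7 for (almost) extraspecial groups. Your quadratic-form set-up reproduces this reduction neatly, and that makes the missing piece visible.

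Suppose some nonzero $v$ with $q(v)=0$ is orthogonal to all of $V$ (i.e.\ $G=G_0\times C_2$). Then $q$ factors through a form $\bar q$ on $V/\langle v\rangle$, and $t_k(q)=t_{k-1}(\bar q)+2^k t_k(\bar q)$. By the rule $\left[\begin{array}{c} a\\ k\end{array}\right]_2=\left[\begin{array}{c} a-1\\ k-1\end{array}\right]_2+2^k\left[\begin{array}{c} a-1\\ k\end{array}\right]_2$, the extremal numbers obey the same recurrence. So induction on $m$, with $m=2$ (including $x_1^2$) checked by hand, reduces you to forms without such $v$:
\begin{itemize}
\item nondegenerate forms of dimension $2r$ (extraspecial $G$);
\item nondegenerate forms $\perp x^2$ of dimension $2r+1$ (the groups $D_8^{*r}*C_4$).
\end{itemize}

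For these you must genuinely compare the number of totally singular $k$-subspaces with $2\left[\begin{array}{c} m-1\\ k\end{array}\right]_2-\left[\begin{array}{c} m-2\\ k\end{array}\right]_2$. In the hyperbolic case, for example, that number is $\left[\begin{array}{c} r\\ k\end{array}\right]_2\prod_{i=r-k}^{r-1}(2^i+1)$. This comparison is the analogue of Lemma 2.7, the one non-formal ingredient of the cited proof, and it is the step your proposal does not supply.
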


Recall that  a finite $2$-group is called:

- extraspecial if $Z(G) = G' = \Phi(G)$ has order $2$;

- almost extraspecial if $G' = \Phi(G)$ has order $2$ and $Z(G)\leq C_4$;

- generalized extraspecial if $G' = \Phi(G)$ has order $2$ and $G'\leq Z(G)$.

The structure of these groups is well-known as follows.

\begin{lem} \cite {BO, ST}
Let $G$ be a finite $2$-group.

(1) If $G$ is extraspecial, then $|G| =  2^{2r+1}$ for some positive integer $r$ and
either $G \cong D_{8}^{*r}$ or $G \cong Q_8 * D_{8}^{*(r-1)}$.

(2) If $G$ is almost extraspecial, then $|G| = 2^{2r+2}$ for some positive integer $r$ and $G \cong D_{8}^{*r}*C_4$.

(3) If $G$ is generalized extraspecial, then either $G \cong E \times A$ or $G \cong (E*C_4)  \times A$, where $E$ is an extraspecial $2$-group and $A$ is an elementary abelian $2$-group.
\end{lem}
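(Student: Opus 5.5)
The statement is the classical structure theorem for these $2$-groups, and I would prove it with the usual symplectic/quadratic-form argument over $\mathbb{F}_2$.

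\textbf{Part (1).} Let $G$ be extraspecial and $Z=Z(G)=G'=\Phi(G)=\langle z\rangle$. Then $V=G/Z$ is an elementary abelian $2$-group, regarded as an $\mathbb{F}_2$-space. The commutator map $(xZ,yZ)\mapsto [x,y]\in Z\cong\mathbb{F}_2$ is a well-defined alternating bilinear form on $V$, because $G'\le Z(G)$ makes commutators bilinear. It is nondegenerate, since its radical is $Z(G)/Z=1$. Hence $\dim V=2r$ is even and $|G|=2^{2r+1}$.

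Now pick $a,b$ with $[a,b]=z$. Then $H=\langle a,b\rangle$ is nonabelian of order $8$, so $H\cong D_8$ or $Q_8$. Next, $C_G(H)$ is the preimage of the orthogonal complement $\langle aZ,bZ\rangle^{\perp}$. It satisfies $G=H\,C_G(H)$ with $H\cap C_G(H)=Z$, and $C_G(H)$ is either extraspecial of order $2^{2r-1}$ or equal to $Z$. Induction then gives $G$ as a central product of $r$ factors, each isomorphic to $D_8$ or $Q_8$. Finally I verify the standard isomorphism $Q_8*Q_8\cong D_8*D_8$, for instance by exhibiting inside $Q_8*Q_8$ two commuting $D_8$'s: in $\langle i,j\rangle*\langle i',j'\rangle$, take $\langle i i', j\rangle$ and $\langle j j', \dots\rangle$, chosen so that the generators are involutions. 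Replacing pairs of $Q_8$'s in this way leaves at most one $Q_8$.

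\textbf{Part (2).} Here $G'=\Phi(G)$ has order $2$ and $Z(G)\le C_4$, but $G$ is not extraspecial. So $Z(G)=\langle c\rangle\cong C_4$ with $c^2=z$. Then $\bar V=G/Z(G)$ carries the same commutator form, which is again nondegenerate; say $\dim\bar V=2r$, so $|G|=2^{2r+2}$. Lifting a symplectic basis exactly as in (1) produces commuting subgroups $H_1,\dots,H_r$, each isomorphic to $D_8$ or $Q_8$. This gives $G=(H_1*\cdots*H_r)*\langle c\rangle$, and I then use $Q_8*C_4\cong D_8*C_4$: if $Q_8=\langle i,j\rangle$, then $i$ and $jc$ generate a $D_8$ together with $c$. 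This converts every $Q_8$ factor, giving $G\cong D_8^{*r}*C_4$.

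\textbf{Part (3).} Let $G$ be generalized extraspecial. Then $Z=Z(G)$ satisfies $\mho_1(Z)\le\Phi(G)=G'$, which has order $2$, so $Z$ is either elementary abelian or $C_4\times C_2^k$. Choose $Z_0\le Z$ with $G'\le Z_0$, where $Z_0=G'$ or $Z_0\cong C_4$, and a complement $A$ with $Z=Z_0\times A$; then $A$ is elementary abelian and $A\cap\Phi(G)=1$. Next take a complement $W$ to $A\Phi(G)/\Phi(G)$ in $G/\Phi(G)$ that contains $Z_0\Phi(G)/\Phi(G)$, and let $K$ be its preimage. Then $K\supseteq\Phi(G)\supseteq G'$, so $K\trianglelefteq G$, $G=KA$ and $K\cap A=1$. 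As $A$ is central, $G=K\times A$. Now $K'=G'=\Phi(K)$ and $Z(K)=Z_0$, so $K$ is extraspecial or almost extraspecial, and (1) and (2) give the two forms $E$ and $E*C_4$.

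\textbf{Main obstacle.} I expect the delicate points to be the explicit isomorphisms $Q_8*Q_8\cong D_8*D_8$ and $Q_8*C_4\cong D_8*C_4$, together with making sure the chosen complement $A$ in (3) meets $\Phi(G)$ trivially. The complement step is handled by splitting $Z$ compatibly with $\Omega_1$ and $\mho_1$ before choosing $A$.
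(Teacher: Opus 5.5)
The paper gives no proof of this lemma; it quotes the classification from its cited sources. Your proposal is a correct, self-contained proof along the standard lines, and I checked each step. You use the commutator form on $G/Z(G)$ with values in $G'\cong\mathbb{F}_2$ and split off hyperbolic pairs to get a central product of $D_8$'s and $Q_8$'s. You then apply $Q_8*Q_8\cong D_8*D_8$ and $Q_8*C_4\cong D_8*C_4$, and for (3) you split $G=K\times A$ with $A$ a central elementary abelian complement. Citing buys the authors brevity. Your route makes the hypotheses visible: the paper's ``$Z(G)\le C_4$'' must be read as $Z(G)\cong C_4$, as you do, since otherwise extraspecial groups would satisfy the hypothesis of (2) and violate its conclusion. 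The statement also never asserts $D_8^{*r}\not\cong Q_8*D_8^{*(r-1)}$, so the alternating form suffices and no quadratic form is needed.

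A few steps deserve one more line each.

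In (1), the ``$\dots$'' is $i'$. The two commuting dihedral subgroups of $Q_8*Q_8$ are $\langle ii',j\rangle$ and $\langle i',jj'\rangle$. Indeed $(ii')^2=(jj')^2=1$ and $[ii',jj']=[i,j][i',j']=1$, and the two subgroups meet in $\langle z\rangle$.

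In (2), ``exactly as in (1)'' hides a point. The lifts $a_k,b_k$ have squares in $\Phi(G)=G'=\langle z\rangle$, not merely in $Z(G)=\langle c\rangle$. This is what makes each $H_k$ of order $8$ and $P=H_1\cdots H_r$ extraspecial of order $2^{2r+1}$. Hence $c\notin P$ (otherwise $c\in Z(P)=\langle z\rangle$), and $G=P*\langle c\rangle$.

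In (3), three details need stating.
\begin{itemize}
\item When $Z\cong C_4\times C_2^k$, the squares of $Z$ form a subgroup of order $2$ inside $\Phi(G)=G'$, so they equal $G'$. Thus any $c\in Z$ of order $4$ has $c^2=z$, and $Z_0=\langle c\rangle$, being cyclic of order $\exp Z$, is a direct factor of $Z$.
\item The choice of $W$ is possible because $Z_0\Phi(G)\cap A\Phi(G)=\Phi(G)(Z_0\cap A)=\Phi(G)$, by Dedekind's law and $\Phi(G)\le Z_0$.
\item $Z(K)=K\cap Z(G)=K\cap(Z_0\times A)=Z_0(K\cap A)=Z_0$. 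This is exactly where $Z_0\le K$ is used.
\end{itemize}

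With these additions the argument is complete.
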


\begin{lem}\cite {TA}
Given a $2$-group $G$, we denote by $\mathcal{S}_{\alpha,\beta}(G)$ the set of all
elementary abelian sections $H_2/H_1\cong C_{2}^{\alpha}$ of $G$ with $|H_1|=2^{\beta}$.
 If $G$ is  (almost) extraspecial of order $2^n$, then
 \begin{center}
$|\mathcal{S}_{\alpha,\beta}(G)|\leq|\mathcal{S}_{\alpha,\beta}(D_8\times C_{2}^{n-3})|$, for all $\alpha$ and $\beta$.
\end{center}
\end{lem}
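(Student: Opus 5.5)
The plan is to count the sections $H_2/H_1$ according to how they meet $\Phi:=\Phi(G)=G'$, and to make the same split for $T:=D_8\times C_2^{n-3}$. Both groups have a Frattini subgroup of order $2$ and a Frattini quotient $V\cong C_2^{\,n-1}$, since this holds in the extraspecial, the almost extraspecial and the $T$ cases. On $V$ the squaring map $q(x\Phi)=x^2$ is a quadratic form with values in $\Phi\cong\mathbb F_2$, and its polar form is the commutator pairing. So everything will be expressed through the quadratic spaces $(V,q_G)$ and $(V,q_T)$. For $T$, writing $D_8=\langle a,b\rangle$ with $a,b$ involutions, we have $q_T=x_1x_2$ with radical of dimension $n-3$. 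For $G$ extraspecial, $q_G$ is nondegenerate. For $G$ almost extraspecial, $q_G$ has a $1$-dimensional radical on which $q_G\neq 0$ (the image of the $C_4$).

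\textbf{Case 1: $\Phi\le H_1$.} Then $H_2/H_1$ is a section of the elementary abelian group $G/\Phi=V$. Such sections are pairs of subspaces $U\le W\le V$ with $\dim U=\beta-1$ and $\dim W=\beta-1+\alpha$. This count depends only on $n$, so it is the same for $G$ and for $T$.

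\textbf{Case 2: $\Phi\not\le H_1$.} Then $H_1\cap\Phi=1$ and $\Phi(H_1)\le\Phi\cap H_1=1$, so $H_1$ is elementary abelian. Moreover $\Phi(H_2)\le H_1\cap\Phi(G)=1$, so $H_2$ is elementary abelian too. Put $U=H_1\Phi/\Phi$ and $W=H_2\Phi/\Phi$. An elementary abelian subgroup projects onto a subspace that is totally singular for $q$. Conversely, a totally singular $W$ lifts either to its full preimage, when $H_2\ge\Phi$, or to one of its $2^{\dim W}$ complements to $\Phi$; every such complement is elementary abelian, because $q|_W=0$ forces the commutators and squares to vanish. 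Hence the Case 2 count equals
\[
\sum_{U\le W}c(U,W),
\]
summed over pairs $U\le W$ of totally $q$-singular subspaces. Here $c(U,W)$ is determined by $\dim U$ and $\dim W$: it is $2^{\dim U}$ choices of $H_1$ times either one choice of $H_2$ (if $\dim W=\dim U$, so $H_2=H_1\Phi$, giving $\alpha=1$) or $2^{\dim W-\dim U}$ choices (lift $W$ as a complement containing $H_1$). The important point is that this multiplicity is the same for $G$ and $T$. The statement therefore reduces to a purely geometric inequality: for all $u\le w$, the number of flags $U\le W$ of totally singular subspaces with $\dim U=u$ and $\dim W=w$ in $(V,q_G)$ is at most the corresponding number in $(V,q_T)$.

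The main obstacle is this flag inequality. I would prove it by counting ordered bases $v_1,\dots,v_w$ of totally singular subspaces, in the order $v_1,v_2,\ldots$, and comparing at each step. Given a totally singular $\langle v_1,\dots,v_{i}\rangle$, the admissible $v_{i+1}$ are the singular vectors of the $(n-1-i)$-dimensional space $\langle v_1,\dots,v_i\rangle^{\perp}$ that lie outside $\langle v_1,\dots,v_i\rangle$. Passing to $\langle v_1,\dots,v_i\rangle^{\perp}/\langle v_1,\dots,v_i\rangle$ gives a quadratic space of the same type but smaller, so the step reduces to comparing numbers of singular vectors. For $q_T$ every successive quotient is still of the form $x_1x_2\oplus 0$ (or $0$). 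Such a form has the largest possible number of singular vectors among forms whose radical is not totally singular-free: it has $3\cdot 2^{m-2}$ singular vectors in dimension $m$, against $2^{m-1}\pm 2^{m/2-1}$ for nondegenerate forms and a comparable count in the almost extraspecial case. I would check these elementary estimates, together with the equality cases in small dimension (in particular $\dim\le 2$, where the counts can coincide). With them, a step-by-step product comparison gives the flag inequality, and summing Cases 1 and 2 yields $|\mathcal S_{\alpha,\beta}(G)|\le|\mathcal S_{\alpha,\beta}(D_8\times C_2^{n-3})|$.
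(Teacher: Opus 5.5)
The paper gives no proof of this lemma; it is simply quoted from [TA]. Your proposal is therefore a self-contained replacement rather than a reconstruction, and its architecture is sound:
\begin{itemize}
\item the split into $\Phi\le H_1$ versus $H_1\cap\Phi=1$ is the right one;
\item in the second case, $\Phi(H_2)\le H_1\cap\Phi(G)=1$ does force $H_2$ to be elementary abelian;
\item identifying such subgroups with lifts of $q$-totally singular subspaces of $V=G/\Phi$ is correct.
\end{itemize}
The inequality you are led to is also true. Compared with the bare citation, your route buys independence from [TA]. It gives Lemma 2.8 at once as the case $(\alpha,\beta)=(2,0)$. It also shows that everything is governed by the numbers $N(w)$ of totally singular $w$-subspaces of $(V,q)$. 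Two steps need repair, though.

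\textbf{The multiplicity $c(U,W)$ is misstated.} The full-preimage lift $H_2=\widetilde W$ exists for every totally singular $W\ge U$, not only for $W=U$. For fixed $(\alpha,\beta)$ the Case 2 count is $2^{\beta}F(\beta,\alpha+\beta-1)+2^{\alpha+\beta}F(\beta,\alpha+\beta)$, where $F(u,w)$ is your flag count. The coefficients are still independent of the group, so your reduction survives. Moreover, subspaces of totally singular spaces are totally singular, so $F(u,w)$ is $N(w)$ times the number of $u$-subspaces of $\mathbb{F}_2^{w}$. Hence you only need $N_G(w)\le N_T(w)$.

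\textbf{The extension step for $q_T$ is wrong as stated.} The claim $\dim X_i^{\perp}=n-1-i$ fails for $q_T$. Put $m=n-1$ and let $R$ be the $(m-2)$-dimensional radical of the polar form. Then $X_i^{\perp}=V$ if $X_i\le R$; otherwise $X_i^{\perp}$ is a hyperplane on which $q_T\equiv 0$. So for $T$ the number of admissible $v_{i+1}$ depends on $X_i$, and you need a lower bound valid for every totally singular $X_i$, not a product. The counts are:
\begin{itemize}
\item for $T$: $3\cdot 2^{m-2}-2^{i}$ if $X_i\le R$, and $2^{m-1}-2^{i}$ otherwise (the latter only when $i\ge 1$);
\item for extraspecial $G$ ($i<m/2$): the constant $2^{m-i-1}\pm 2^{m/2-1}-2^{i}$;
\item for almost extraspecial $G$: the constant $2^{m-i-1}-2^{i}$. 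Here $\dim X_i^{\perp}=m-i$ does hold, because the radical vector is nonsingular.
\end{itemize}
The required inequalities are immediate in the almost extraspecial case. In the extraspecial case they follow from $2^{m-2}\ge 2^{m/2-1}$. Induction on $i$ then shows that $T$ has at least as many ordered bases of totally singular $w$-subspaces as $G$, which gives $N_G(w)\le N_T(w)$.

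Replace your heuristic about forms ``whose radical is not totally singular-free'' by these explicit counts, and the argument is complete.
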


\begin{lem}
 If $G$ is an (almost) extraspecial group of order $2^n$, then
 \begin{center}
$\delta_2(G) \leq \delta_2(D_8\times C_{2}^{n-3}) $.
\end{center}
\end{lem}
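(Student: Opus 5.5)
Every non-cyclic subgroup of order $4$ in a $2$-group is a Klein four-group $C_2^2$, so $\delta_2(G)$ equals the number of elementary abelian subgroups of order $4$ of $G$. In the notation of the preceding lemma of \cite{TA}, this is exactly $|\mathcal{S}_{2,0}(G)|$: an elementary abelian section $H_2/H_1\cong C_2^2$ with $|H_1|=2^0=1$ is precisely a subgroup $H_2\cong C_2^2$. The plan is therefore to recognise $\delta_2$ as a special case of the section count and then apply that lemma with $\alpha=2$, $\beta=0$.

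First, check the identification carefully. A group of order $4$ is either $C_4$, which is cyclic, or $C_2\times C_2$, which is not. Hence
\[
\delta_2(X)=\#\{H\le X : H\cong C_2^2\}=|\mathcal{S}_{2,0}(X)|
\]
for every $2$-group $X$, and in particular for both $G$ and $D_8\times C_2^{n-3}$. Here we take $H_1=1$, so the section $H_2/H_1$ is the subgroup $H_2$ itself and no overcounting occurs.

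Next, since $G$ is (almost) extraspecial of order $2^n$, the lemma of \cite{TA} with $\alpha=2$, $\beta=0$ gives $|\mathcal{S}_{2,0}(G)|\le |\mathcal{S}_{2,0}(D_8\times C_2^{n-3})|$. Combining this with the identification yields $\delta_2(G)\le\delta_2(D_8\times C_2^{n-3})$.

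The only potential obstacle is whether the quoted lemma is really valid for $\beta=0$, that is, for sections with trivial $H_1$. If that is doubtful, I would fall back on a direct count. In $D_8^{*r}$, $Q_8*D_8^{*(r-1)}$ or $D_8^{*r}*C_4$, every Klein four-subgroup either contains $Z=\langle z\rangle$ or not. Those containing $z$ correspond to involutions outside $\langle z\rangle$, which can be counted via the quadratic form on $G/Z$. Those not containing $z$ are pairs of commuting non-central involutions whose product is also an involution, and these can be counted via totally singular isotropic lines of the form. One then compares these counts with the analogous counts for $D_8\times C_2^{n-3}$, which has the maximal number of involutions among these groups, and checks the inequality term by term.
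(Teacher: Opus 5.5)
Your proof is correct and is the paper's argument: since every non-cyclic group of order $4$ is $C_2^2$, you get $\delta_2(X)=|\mathcal{S}_{2,0}(X)|$, and the inequality is then Lemma 2.7 with $\alpha=2$, $\beta=0$. The fallback count via the quadratic form is not needed, because Lemma 2.7 is stated for all $\alpha$ and $\beta$, including $\beta=0$.
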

\noindent {\bf Proof.} This follows from Lemma 2.7 taking $\alpha = 2$ and $\beta= 0$. $\Box$

\begin{lem} \cite {SU}
Let $A$ and $B$ be two finite groups. Then every subgroup $H$ of
the direct product $A\times B$ is completely determined by a quintuple $(A_1,A_2,B_1,B_2,\varphi)$
  where $A_1  \trianglelefteq  A_2 \leq A$, $B_1 \trianglelefteq B_2 \leq B$ and $\varphi : A_2/A_1 \longrightarrow B_2/B_1$ is an isomorphism, more exactly $H =\{(a, b) \in A_2\times  B_2 | \varphi(aA_1) = bB_1\}$. Moreover,
we have $|H| = |A_1||B_2| = |A_2||B_1|$.
\end{lem}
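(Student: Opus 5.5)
The natural route is the classical proof of Goursat's lemma: starting from a subgroup $H\leq A\times B$, extract the four subgroups and the isomorphism from the coordinate projections, and then check that the construction can be reversed. Let $\pi_A:A\times B\to A$ and $\pi_B:A\times B\to B$ be the projections. I would set $A_2=\pi_A(H)$ and $B_2=\pi_B(H)$, and
$$A_1=\{a\in A\mid (a,1)\in H\},\qquad B_1=\{b\in B\mid (1,b)\in H\}.$$
These are clearly subgroups with $A_1\leq A_2$ and $B_1\leq B_2$.

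\textbf{Normality.} Take $a\in A_1$ and $a'\in A_2$, and choose $b'$ with $(a',b')\in H$. Conjugating gives $(a',b')(a,1)(a',b')^{-1}=(a'aa'^{-1},1)\in H$. Hence $A_1\trianglelefteq A_2$, and symmetrically $B_1\trianglelefteq B_2$.

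\textbf{The map $\varphi$.} Define $\varphi(aA_1)=bB_1$ whenever $(a,b)\in H$.
\begin{itemize}
\item Well defined: suppose $(a,b),(a',b')\in H$ with $aA_1=a'A_1$. Then $(a^{-1}a',1)\in H$, so $(1,b^{-1}b')=(a,b)^{-1}(a',b')(a^{-1}a',1)^{-1}\in H$. Thus $b^{-1}b'\in B_1$.
\item Homomorphism: this is immediate because $H$ is closed under multiplication.
\item Surjective: every $b\in B_2$ occurs as the second coordinate of some element of $H$.
\item Injective: if $\varphi(aA_1)=B_1$, pick $(a,b)\in H$ with $b\in B_1$. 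Then $(a,1)=(a,b)(1,b)^{-1}\in H$, so $a\in A_1$.
\end{itemize}
This also shows $H\subseteq\{(a,b)\in A_2\times B_2\mid \varphi(aA_1)=bB_1\}$. For the reverse inclusion, take $(a,b)$ in the right-hand side. Choose $(a,b_0)\in H$; then $b_0B_1=bB_1$, so $(a,b)=(a,b_0)(1,b_0^{-1}b)\in H$.

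\textbf{Converse and uniqueness.} Given any quintuple $(A_1,A_2,B_1,B_2,\varphi)$, the set $\{(a,b)\in A_2\times B_2\mid\varphi(aA_1)=bB_1\}$ is a subgroup, since $\varphi$ is a homomorphism. One then checks that the construction above recovers exactly this quintuple. The only point needing care is recovering $A_1$: if $(a,1)$ lies in the set, then $\varphi(aA_1)=B_1$, so $a\in A_1$ because $\varphi$ is injective. The same argument recovers $B_1$, and $A_2$, $B_2$, $\varphi$ are recovered directly. So the correspondence is bijective and $H$ is completely determined by its quintuple.

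\textbf{Order formula.} Restrict $\pi_A$ to $H$. It maps $H$ onto $A_2$, and its kernel is $\{(1,b)\in H\}\cong B_1$. Hence $|H|=|A_2||B_1|$. Restricting $\pi_B$ instead gives $|H|=|A_1||B_2|$ by symmetry.

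All steps are routine. The only place needing care is the well-definedness and injectivity of $\varphi$, which both reduce to the definitions of $A_1$ and $B_1$ as the "coordinate kernels" of $H$.
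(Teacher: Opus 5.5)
Your argument is correct and complete. It is the standard proof of Goursat's lemma: the projections give $A_2,B_2$; the coordinate kernels give $A_1\trianglelefteq A_2$ and $B_1\trianglelefteq B_2$; the induced map is a well-defined isomorphism; and $|H|=|A_2||B_1|$ follows from $\ker(\pi_A|_H)=1\times B_1$. The paper gives no proof of its own but cites Suzuki, and your argument is essentially that standard one (you even prove the stronger bijective correspondence), so there is nothing to add.
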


\section{ The Proof of Main Results}
\begin{prop}
Let $p$ be an odd prime and let $G$ be a group of order $p^n$ with  $\exp(G)=p^m$. Then
\begin{center}
$\delta(G)\leq \frac {p^{n-m+1}-p^n}{p^m-p^{m-1}}+s(G)-s_1(G)-1$,
\end{center}
\end{prop}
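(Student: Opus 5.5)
The plan is to rewrite $\delta(G)$ in terms of cyclic subgroups and then bound the number of cyclic subgroups of order at least $p^2$ from below by counting elements. Every subgroup is either cyclic or non-cyclic, so $\delta(G)=s(G)-c(G)$, where $c(G)=\sum_{k=0}^{m}c_k(G)$. We have $c_0(G)=1$, and every subgroup of order $p$ is cyclic, so $c_1(G)=s_1(G)$. Hence
\[
\delta(G)=s(G)-s_1(G)-1-\sum_{k=2}^{m}c_k(G).
\]
Note that $\frac{p^{n-m+1}-p^n}{p^m-p^{m-1}}=-\frac{p^n-p^{n-m+1}}{p^{m-1}(p-1)}$. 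So the proposition is equivalent to the lower bound
\[
\sum_{k=2}^{m}c_k(G)\ \geq\ \frac{p^n-p^{n-m+1}}{p^{m-1}(p-1)} .
\]

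To get this, I count elements of order at least $p^2$. Each cyclic subgroup of order $p^k$ has exactly $\varphi(p^k)=p^{k-1}(p-1)$ generators, and every element of order $p^k$ generates exactly one such subgroup. So the number of elements of $G$ of order $p^k$ equals $p^{k-1}(p-1)c_k(G)$. Since $k\le m$, this is at most $p^{m-1}(p-1)c_k(G)$. Summing over $2\le k\le m$ gives
\[
p^{m-1}(p-1)\sum_{k=2}^m c_k(G)\ \ge\ |G|-|\{x\in G: x^p=1\}|.
\]
It therefore suffices to prove that $|\{x\in G:x^p=1\}|\le p^{n-m+1}$ when $p$ is odd and $\exp(G)=p^m$.

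This last inequality is the main obstacle, and it is where the hypothesis that $p$ is odd must enter. If $G$ is regular it is immediate. Then $\{x:x^p=1\}=\Omega_1(G)$ and $|G:\Omega_1(G)|=|\mho_1(G)|$. Moreover $\mho_1(G)$ contains $a^p$ for an element $a$ of order $p^m$, so $|\mho_1(G)|\ge p^{m-1}$.

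For general $G$ I would first try a coset argument with a cyclic subgroup $C=\langle a\rangle$ of order $p^m$. The aim is to show that each coset of a suitable subgroup (for instance of $\Omega_1$ of a normal subgroup containing $C$, or of $C$ itself in a normal series through $C$) contains at most one $p$-th of its elements of order $\le p$, iterated $m-1$ times. Failing that, I would invoke the known counting results of Berkovich's book (cited as \cite{BE2}) on the number of solutions of $x^{p}=1$ in $p$-groups with $p>2$. Alternatively, I would reduce to the case where $G$ is of maximal class, treat that case separately, and then argue by induction on $n$ through a maximal subgroup containing $C$, which has exponent $p^m$ and order $p^{n-1}$.

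Once the element bound is in hand, the result follows by dividing by $p^{m-1}(p-1)$ and substituting into the expression for $\delta(G)$ above.
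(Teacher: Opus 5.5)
Your reduction is right, and it is the same one the paper uses. You write $\delta(G)=s(G)-s_1(G)-1-\sum_{k=2}^m c_k(G)$ and use $\varphi(p^k)\le\varphi(p^m)$, so everything rests on $|\{x\in G:x^p=1\}|=1+(p-1)s_1(G)\le p^{n-m+1}$. Your argument for regular $G$ is correct. The gap is the step you leave open for irregular $G$, and it cannot be closed: that inequality is false, and so is the proposition itself.

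Take $G=C_p\wr C_p=C_p^{p}\rtimes\langle\sigma\rangle$, which has order $p^{p+1}$ and exponent $p^2$, so $n-m+1=p$. Write the base additively. Every base element satisfies $x^p=1$. For $j\neq 0$, $(v\sigma^j)^p$ is the norm $N(v)=(\sum_i v_i)(1,\dots,1)$, so exactly $p^{p-1}$ elements of each coset $C_p^{p}\sigma^j$ satisfy $x^p=1$. Hence $|\{x:x^p=1\}|=p^p+(p-1)p^{p-1}>p^p$.

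The statement itself also fails for this group. There are $(p-1)^2p^{p-1}$ elements of order $p^2$, so $c_2(G)=(p-1)p^{p-2}$. But, by your own equivalence, the proposition demands $c_2(G)\ge (p^{p+1}-p^p)/(p^2-p)=p^{p-1}$. For $p=3$: $|G|=81$, with $44$ elements of order $3$ and $36$ of order $9$. So $s_1(G)=22$ and $c_2(G)=6$, giving $\delta(G)=s(G)-29$, while the claimed bound is $s(G)-32$. Taking direct products $G\times C_p^{k}$ gives counterexamples for every larger $n$.

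None of your fallback strategies can succeed, since they would prove a false inequality: not a coset count, not Berkovich's results on solutions of $x^p=1$, and not a reduction to maximal class (the counterexample is itself of maximal class).

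The paper's proof has the same hole. It derives $s_1(G)\le (p^{n-m+1}-1)/(p-1)$ from ``$\mathrm{rank}(G)\le n-m+1$'' and Lemma 2.3(1). That lemma bounds $s_1(G)$ in terms of $\log_p|G|$, not the rank. Here $s_1(G)=22>13=(3^3-1)/2$.

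What your argument does establish is the inequality under the extra hypothesis $|\{x:x^p=1\}|\le p^{n-m+1}$. In particular it holds for all regular $p$-groups, such as those of class $<p$ or order $\le p^p$. There $\{x:x^p=1\}$ is a subgroup of index $|\{x^p:x\in G\}|\ge p^{m-1}$.
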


\noindent {\bf Proof.} Since $|G|=p^n$ and $\exp(G)=p^m$, we have
\begin{align*}
p^n&=1+(p-1)c_1(G)+(p^2-p)c_2(G)+\cdots+(p^m-p^{m-1})c_m(G)\\
&=1+(p-1)c_1(G)+(p^2-p)(s_2(G)-\delta_2(G))+\cdots+(p^m-p^{m-1})(s_m(G)-c_m(G))\\
&\leq 1+(p-1)c_1(G)+(p^m-p^{m-1})[(s_2(G)+\cdots+s_m(G))-(\delta_2(G)+\cdots\delta_m(G))].
\end{align*}
This implies that
\begin{center}
$\delta_2(G)+\cdots+\delta_m(G)\leq \frac{1-p^n}{p^m-p^{m-1}}+\frac{p-1}{p^m-p^{m-1}}c_1(G)+s_2(G)+\cdots+s_m(G)$.
\end{center}
Observe that $\exp(G)=p^m$, then $\delta_{k}(G)=s_{k}(G)$ for $k=m+1,\cdots, n$. It is obvious that $s_1(G)=c_1(G)$.
So we have
\begin{align*}
\delta(G)&=\delta_2(G)+\cdots+\delta_m(G)+\delta_{m+1}(G)+\cdots+\delta_{n}(G)\\
&\leq\frac{1-p^n}{p^m-p^{m-1}}+\frac{p-1}{p^m-p^{m-1}}s_1(G)+s_2(G)+\cdots+s_m(G)+\delta_{m+1}+\cdots+\delta_{n}(G)\\
&=\frac{1-p^n}{p^m-p^{m-1}}+\frac{p-1}{p^m-p^{m-1}}s_1(G)+(s_2(G)+\cdots+s_m(G)+s_{m+1}(G)+\cdots+s_n(G))\\
&=\frac{1-p^{n-m+1}+p^{n-m+1}-p^n}{p^m-p^{m-1}}+\frac{p-1}{p^m-p^{m-1}}s_1(G)+(s(G)-s_1(G)-1).
\end{align*}
Applying the condition $\exp(G)=p^m$ again, we can see that $\text{rank}(G)\leq n-m+1$. By Lemma 2.3(1), we have
\begin{center}
 $s_1(G)\leq \left [\begin{array} {c}
 n-m+1\\
 1
 \end{array}\right ]_p=\frac {p^{n-m+1}-1}{p-1}$.
 \end{center}
 This leads to   $1-p^{n-m+1}\leq (1-p)s_1(G)$. Consequently, we have
 \begin{align*}
 \delta(G)&\leq \frac{1-p^{n-m+1}+p^{n-m+1}-p^n}{p^m-p^{m-1}}+\frac{p-1}{p^m-p^{m-1}}s_1(G)+(s(G)-s_1(G)-1)\\
 &\leq \frac {p^{n-m+1}-p^n}{p^m-p^{m-1}}+s(G)-s_1(G)-1.
 \end{align*}
So the proof of Proposition is complete.
$\Box$

\vskip 0.3cm

\noindent {\bf Proof of Theorem 1.1.}  Suppose  $G$ is  a group of order $p^n$ with $\exp(G)=p^m$. Firstly, applying   Proposition 3.1,  we have
\begin{center}
$\delta(G) \leq \frac {p^{n-m+1}-p^n}{p^m-p^{m-1}}+s(G)-s_1(G)-1$.
\end{center}
Furthermore, observe that $\frac {p^{n-m+1}-p^n}{p^m-p^{m-1}}\leq 0$ and Lemma 2.3(2), we get
 \begin{center}
$\delta(G) \leq  s(G)-s_1(G)-1=\sum\limits_{k=2}^ns_k(G)\leq \sum\limits_{k=2}^n\left [\begin{array}{c}
                                                                              n \\
                                                                              k
                                                                            \end{array}\right ]_p$.
\end{center}
This implies that  equality holds  only if $m=1$.  Finally, applying  Lemma 2.3(2) again, we get that the equality holds if and only if $G$ is an elementary abelian $p$-group. The proof of the theorem is complete. $\Box$

\vskip 0.3cm

\noindent {\bf Proof of Theorem 1.2.}  Suppose  $G$ is  a group of order $p^n$ with $\exp(G)=p^m$. Firstly, applying   Proposition 3.1,  we have
\begin{center}
$\delta(G) \leq \frac {p^{n-m+1}-p^n}{p^m-p^{m-1}}+s(G)-s_1(G)-1$.
\end{center}
Furthermore, observe that $\frac {p^{n-m+1}-p^n}{p^m-p^{m-1}}\leq 0$ and Lemma 2.3(2), we get
 \begin{center}
$\delta(G) \leq  s(G)-s_1(G)-1=\sum\limits_{k=2}^ns_k(G)$.
\end{center}
This implies that the equality holds forces to $m=1$. By Lemma 2.4, we have $s_k(G) \leq s_k(\widetilde{G})$ for $\forall 1\leq k \leq  n$, where $\widetilde{G}= M_p(1, 1, 1) \times C_{p}^{n-3}$. So we get $\delta_c(G)\leq \delta_c(\widetilde{G})$. The proof of theorem is complete. $\Box$

 \begin{prop}
Assume $G$ is a group of order $p^n$ and $N$ is a normal subgroup of order $p$ of $G$,
then $\delta_k(G)\leq \delta_k(G/N\times C_p)$ for $1\leq k \leq n$.
\end{prop}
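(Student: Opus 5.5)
We prove the inequality by splitting the subgroups of order $p^k$ according to whether they contain the distinguished normal subgroup of order $p$. In $G$ this subgroup is $N$; in $\widetilde H:=G/N\times C_p$ it is $M:=1\times C_p$, which is central. Write $\overline G=G/N$, so $\widetilde H/M\cong\overline G$. For $k=1$ both sides are $0$, so assume $2\le k\le n$. It suffices to prove the inequality separately for the two classes of subgroups: those containing the distinguished subgroup, and those meeting it trivially.

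\emph{Subgroups containing the distinguished subgroup.} Subgroups $K\ge N$ of $G$ with $|K|=p^k$ correspond bijectively to subgroups of $\overline G$ of order $p^{k-1}$. There are $s_{k-1}(\overline G)$ of them, so at most that many are non-cyclic. Likewise, subgroups $L\ge M$ of $\widetilde H$ of order $p^k$ correspond to subgroups $\overline L\le \overline G$ of order $p^{k-1}$. Each such $L$ equals $\overline L\times C_p$, where $\overline L$ is identified with its copy $\overline L\times 1$. Since $k\ge 2$ and $p$ divides $|\overline L|$, this group is never cyclic. Hence all $s_{k-1}(\overline G)$ of these subgroups of $\widetilde H$ are non-cyclic, which settles this class.

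\emph{Subgroups meeting the distinguished subgroup trivially.} Let $K\le G$ with $K\cap N=1$ and $|K|=p^k$. Then $X=KN$ satisfies $X/N=\overline K$ of order $p^k$, and $K\cong \overline K$, so $K$ is non-cyclic exactly when $\overline K$ is. Thus these subgroups are grouped by the non-cyclic subgroups $\overline K\le\overline G$ of order $p^k$, and for each $\overline K$ we count the complements of $N$ in its full preimage $X$.

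On the $\widetilde H$ side, the subgroups $L$ with $L\cap M=1$ and $LM/M=\overline K$ are exactly the complements of $M$ in $\overline K\times M$. These are graphs of homomorphisms $\overline K\to C_p$, so there are exactly $|\mathrm{Hom}(\overline K,C_p)|=p^{d(\overline K)}$ of them, each isomorphic to $\overline K$. Therefore it suffices to show that $N$ has at most $p^{d(\overline K)}$ complements in $X$.

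\emph{Counting complements (the main obstacle).} This is the step I expect to be the main point, because $N$ need not be central in $X$. I would argue via derivations. If the set of complements is nonempty, fix one complement $K_0$; it acts on $N$ by conjugation. Every complement of $N$ in $X$ has the form $\{k\,\theta(k): k\in K_0\}$ for a unique derivation (crossed homomorphism) $\theta\in Z^1(K_0,N)$, so the complements are in bijection with $Z^1(K_0,N)$. A derivation is determined by its values on a generating set of $K_0$, and $K_0\cong\overline K$ can be generated by $d(\overline K)$ elements. Each value lies in $N$, which has $p$ elements, so $|Z^1(K_0,N)|\le p^{d(\overline K)}$. If no complement exists, the count is $0$, and the bound holds trivially.

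Summing the second-class bound over all non-cyclic $\overline K\le\overline G$ of order $p^k$, and adding the first-class bound, gives
\[
\delta_k(G)\le \delta_k(G/N\times C_p).
\]
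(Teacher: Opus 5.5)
Your proof is correct, and in the decisive step it takes a different and sounder route than the paper.

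Both arguments use the same partition into subgroups containing $N$ and subgroups meeting $N$ trivially. Both also group the latter by $X=KN$, which is the paper's $H\in\mathcal{S}_3$. The paper, however, does not count complements. Writing $\widehat H=H/N\times C_p$, it expresses $|\mathcal{M}(H)|=\delta_k(H)-\bigl(\delta_{k-1}(H/N)+|\nu_{k-1}(H/N)|\bigr)$ and $|\mathcal{M}(\widehat H)|=\delta_k(\widehat H)-s_{k-1}(H/N)$. It then invokes induction on $|G|$ to get $\delta_k(H)\le\delta_k(\widehat H)$. This does not close the argument, for two reasons:
\begin{itemize}
\item The term subtracted on the $G$ side is at most $s_{k-1}(H/N)$. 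So $\delta_k(H)\le\delta_k(\widehat H)$ does not yield $|\mathcal{M}(H)|\le|\mathcal{M}(\widehat H)|$.
\item When $k=n-1$ one has $H=G$, so no inductive hypothesis is available.
\end{itemize}
Your direct comparison needs no induction and supplies exactly what the paper's reduction is missing. On one side there are at most $p^{d(\overline K)}$ complements of $N$ in $X$; on the other there are exactly $p^{d(\overline K)}$ complements of $M$ in $\overline K\times M$.

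One correction to your commentary: $N$ is in fact central. Conjugation gives a homomorphism from $G$ into $\mathrm{Aut}(N)$, a group of order $p-1$, and this homomorphism must be trivial. So if $N$ has a complement $K_0$ in $X$, then $X=K_0\times N$ and the derivations are just homomorphisms $K_0\to N$. Hence the number of complements is either $0$ or exactly $p^{d(\overline K)}$. Your crossed-homomorphism argument is valid, but more general than necessary.
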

\noindent {\bf Proof.} Our proof will use induction on $|G|$. It is obvious if $|G|=p$ or $p^2$.
Let $\widetilde{G}=G/N\times\widetilde{N}$, where $|\widetilde{N}| = p$. By induction, we can assume that $\delta_k(H)\leq\delta_k(H/N\times \widetilde{N })$ for any subgroup $H$ of $G$ such that $N\leq H$.  Now, let
 \begin{center}
 $\mathcal{S}_1=\{H\leq G\mid N\leq H$, $|H|=p^k$ and $H$ is non-cyclic $\}$,

 $\mathcal{S}_2=\{H\leq G\mid N\not\subset H$, $|H|=p^k$ and $H$ is non-cyclic $\}$.
 \end{center}
 Then we have $\delta_k(G)=|\mathcal{S}_1 | +|  \mathcal{S}_2 |$. On the other hand, set
 \begin{center}
  $\mathcal{T}_1=\{\widetilde{H}\leq\widetilde{G}\mid \widetilde{N}\leq\widetilde{H}, |\widetilde{H}|=p^k\}$

  $\mathcal{T}_2=\{\widetilde{H}\leq\widetilde{G}\mid \widetilde{N}\not\leq\widetilde{H}, |\widetilde{H}|=p^k $ and $\widetilde{H}$ is non-cyclic$\}$.
  \end{center}
  Consequently,    $\delta_k (\widetilde{G})=| \mathcal{T}_1| +| \mathcal{T}_2 |$.
It is easy to see that,  for any subgroup $H$ of order $p^k$ of $G$ such that $N\leq H$, let $\widetilde{H}=H/N\times \widetilde{N}$,  then $\widetilde{H}\in  \mathcal{T}_1$  and hence  $| \mathcal{S}_1|\leq | \mathcal{T}_1|$.

  In the following,  it is sufficient for us to just prove that $\vert \mathcal{S}_2\vert\leq \vert \mathcal{T}_2\vert$.
Let  $\mathcal{S}$ denote the set of all non-cyclic subgroups of order $p^k$ of $G/N$  and $\mathcal{M}(H)$ (res. $\mathcal{M}(\widetilde{H})$) denote the set of all non-cyclic maximal subgroups of $H$ (res. $\widetilde{H}$) which do not contain $N$ (res. $\widetilde{N}$). Write $\mathcal{S}_3=\{H\leq G | H/N\in \mathcal{S}\}$
 and $\mathcal{T}_3=\{\widetilde{H}\leq \widetilde{G} | \widetilde{H}/\widetilde{N}\in \mathcal{S}\}$.
 We  claim $\mathcal{S}_2=\bigcup\limits_{H\in \mathcal{S}_3} \mathcal{M}(H)$ and $\mathcal{T}_2=\bigcup\limits_{\widetilde{H}\in \mathcal{T}_3} \mathcal{M}(\widetilde{H})$. We only show that $\mathcal{S}_2=\bigcup\limits_{H\in \mathcal{S}_3} \mathcal{M}(H)$,  and $\mathcal{T}_2=\bigcup\limits_{\widetilde{H}\in \mathcal{T}_3} \mathcal{M}(\widetilde{H})$ can be obtained by using a similar way.

Firstly, we show that $\mathcal{M}(H_1)\bigcap \mathcal{M}(H_2)=\varnothing$ if $H_1, H_2 \in \mathcal{S}_3$, where $H_1\neq H_2$. If not, assume $L \in\mathcal{M}(H_1)\bigcap\mathcal{M}(H_2)$. Then it is easy to see that $H_1 = LN =H_2$, a contradiction.

Secondly, we can prove that $\mathcal{S}_2=\bigcup\limits_{H\in \mathcal{S}_3}\mathcal{M}(H)$. It is obvious that $\bigcup\limits_{H\in \mathcal{S}_3} \mathcal{M}(H)\subseteq \mathcal{S}_2$. On the other hand, let $L\in \mathcal{S}_2$. Then $LN\in \mathcal{S}_3$ and $L \lessdot LN$. So $L \in \bigcup\limits_{H\in \mathcal{S}_3} \mathcal{M}(H)$. This implies that $\mathcal{S}_2\subseteq\bigcup\limits_{H\in \mathcal{S}_3} \mathcal{M}(H) $ and hence $\mathcal{S}_2=\bigcup\limits_{H\in \mathcal{S}_3}\mathcal{M}(H)$.

Finally, we need to prove that $  |\mathcal{M}(H) | \leq |\mathcal{M}(\widetilde{H})|$ for each $H\in \mathcal{S}_3$, where $\widetilde{H}=H/N\times \widetilde{N}$,    thus $|\mathcal{S}_2 |\leq | \mathcal{T}_2 |$. In fact, we know that
 \begin{center}
 $|\mathcal{M}(H)|= \delta_k(H)-(\delta_{k-1}(H/N)+|\nu_{k-1}(H/N)|)$  and
 $| \mathcal{M}(\widetilde{H})  | = \delta_k(\widetilde{H})-s_{k-1}(H/N)$,
 \end{center}
 where the set $\nu_{k-1}=\{L/N\leq H/N ||L|=p^k,L/N \text{ is cyclic, but }L\text { is non-cyclic}\}$.
 So we only show that $\delta_k(H)\leq  \delta_k(\widetilde{H})$.
 Observe that $H$ is a proper subgroup of $G$, we have assumed $\delta_k(H)\leq  \delta_k(\widetilde{H})$ by induction.  The proof of theorem is complete.
$\Box$
\vskip 0.3cm

Next we remark that Theorem 3.2 can be easily generalized in the following way.

\begin{thm}
  Let $G$ be a finite $p$-group of order $p^n$ and $M$ be a normal
subgroup of order $p^r$ of $G$. Then
\begin{center}
$\delta_k(G) \leq  \delta_k(G/M\times C_{p}^r)$, for any  $ 0 \leq  k \leq n$.
\end{center}
\end{thm}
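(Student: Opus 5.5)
The plan is induction on $r$, iterating Proposition 3.2 one central factor at a time. For $r=0$ there is nothing to prove, and for $r=1$ the statement is exactly Proposition 3.2 (the case $k=0$ is trivial, since both sides are $0$). For $r\geq 2$, since $M$ is a nontrivial normal subgroup of the $p$-group $G$, we have $M\cap Z(G)\neq 1$. So we can choose $N\leq M\cap Z(G)$ with $|N|=p$; then $N$ is normal in $G$. Proposition 3.2 gives
\[
\delta_k(G)\leq \delta_k(G/N\times C_p)\quad\text{for all }k .
\]

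Next we pass to the quotient. Set $\overline{G}=G/N$ and $\overline{M}=M/N$. Then $\overline{M}$ is normal in $\overline{G}$ with $|\overline{M}|=p^{r-1}$. By induction on $r$, applied to $\overline{G}$ (or, to get the factor $C_p$ for free, applied directly to the group $G/N\times C_p$ with the normal subgroup $\overline{M}\times 1$ of order $p^{r-1}$), we obtain
\[
\delta_k(G/N\times C_p)\leq \delta_k\bigl((G/N\times C_p)/(\overline{M}\times 1)\times C_p^{r-1}\bigr)=\delta_k(G/M\times C_p^{r}),
\]
using $(G/N)/(M/N)\cong G/M$. Chaining the two inequalities yields the theorem. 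Applying the induction hypothesis to $G/N\times C_p$ rather than to $G/N$ is what makes the chaining work directly. Otherwise one would need a separate monotonicity statement: that $\delta_k(A)\leq\delta_k(B)$ for all $k$ implies $\delta_k(A\times C_p)\leq\delta_k(B\times C_p)$, which is not obvious because $\delta_k(A\times C_p)$ depends on more than the numbers $\delta_k(A)$.

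The main obstacle is therefore only bookkeeping. The induction must be formulated for arbitrary $p$-groups and arbitrary normal subgroups of order $p^{r}$, so that it can be applied to $G/N\times C_p$, which has order $p^n$. The induction is then on $r$ alone, with $|G|$ unchanged, and it uses Proposition 3.2 as the base step in full generality. One should also check that $\overline{M}\times 1$ is indeed normal in $G/N\times C_p$ and that the quotient is $G/M\times C_p$; both are immediate. No counting beyond Proposition 3.2 is required.
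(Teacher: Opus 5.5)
Your proof is correct and is essentially the paper's argument: the paper's one-line proof (take a chief series of $G$ through $M$ and induct on $r$) is exactly your procedure of removing one normal subgroup of order $p$ at a time via Proposition 3.2. Applying the induction hypothesis to $G/N\times C_p$ with the normal subgroup $(M/N)\times 1$, rather than to $G/N$, is the detail that makes the chaining legitimate without any monotonicity lemma, and the paper leaves it implicit.
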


\noindent {\bf Proof.} Take a chief series of $G$ containing $M$ and use induction on $r$. $\Box$

\begin{thm}
Let $G$ be a finite abelian $2$-group of order $2^n$ with  $n \geq 3$. If $G$ is
not elementary abelian, then
\begin{center}
$\delta(G) \leq \delta(D_8\times C_{2}^{n-3})$.
 \end{center}
\end{thm}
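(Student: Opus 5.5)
Since $\delta(G)=s(G)-c(G)$, the plan is to compare the two terms separately. For $G$ abelian and not elementary abelian, Lemma 2.5 gives $s_k(G)\le s_k(D_8\times C_2^{n-3})$ for every $k$, so $s(G)\le s(D_8\times C_2^{n-3})$. It then suffices to prove the reverse inequality for cyclic subgroups,
\[
c(G)\ \ge\ c(D_8\times C_2^{n-3}),
\]
because subtracting yields $\delta(G)=s(G)-c(G)\le s(D_8\times C_2^{n-3})-c(D_8\times C_2^{n-3})=\delta(D_8\times C_2^{n-3})$.

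First I compute the right-hand side. In $D_8\times C_2^{n-3}$ there are $3\cdot 2^{n-2}$ elements of order $4$ (elements $(x,a)$ with $x$ of order $4$ in $D_8$) and $5\cdot 2^{n-2}-1$ involutions. Each cyclic subgroup of order $4$ has two generators, so
\[
c(D_8\times C_2^{n-3})=1+(5\cdot2^{n-2}-1)+3\cdot 2^{n-3}=13\cdot 2^{n-3}.
\]

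Next I bound $c(G)$ from below. Write $G\cong C_2^{a}\times H$, where $H$ has no $C_2$ factor and $H\ne 1$; say $H$ has $r\ge1$ cyclic factors, each of order at least $4$, and $|H|=2^{n-a}$. The count $c(G)=\sum_{g}1/\varphi(o(g))$ satisfies $c(G)=\sum_{d}\#\{g: o(g)=d\}/\varphi(d)$. Rather than optimizing over all types, I would argue monotonically. Replacing a factor $C_{2^e}$ with $e\ge3$ by $C_{2^{e-1}}\times C_2$ should increase $c$, and does not produce an elementary abelian group as long as some factor of order at least $4$ remains. Iterating this, I reduce to $G=C_4^{r}\times C_2^{a}$ with $2r+a=n$ and $r\ge1$. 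For such $G$, the involutions and the identity number $2^{r+a}$, and there are $2^{n}-2^{r+a}$ elements of order $4$, so
\[
c(G)=2^{r+a}+\tfrac12\bigl(2^n-2^{r+a}\bigr)=2^{n-1}+2^{n-r-1}.
\]
This is decreasing in $r$, but $r\le n/2$, so it is at least $2^{n-1}+2^{n/2-1}$. That is far smaller than $13\cdot 2^{n-3}$ when $r$ is large. So the direct cyclic comparison fails, and the plan has to change.

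The revised plan is to compare $\delta$ termwise: $\delta_k(G)=s_k(G)-c_k(G)$, where $c_k$ counts cyclic subgroups of order $2^k$. For $k=1$ both sides are $0$. For $k\ge2$, I use the formula for $s_k$ of an abelian $2$-group in terms of its type, together with Theorem 3.3 applied with $M=\Omega_1$ or $\mho_1$ pieces, to reduce to small cases. Concretely, I would first apply Proposition 3.2 repeatedly to a normal subgroup $N\le \mho_1(G)$ of order $2$. This gives $\delta_k(G)\le\delta_k(G/N\times C_2)$ and strictly lowers exponents. Iterating, I expect to reach $C_4\times C_2^{n-2}$, which is abelian and not elementary abelian. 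Each step preserves the property "not elementary abelian" as long as I stop at the first time exponent $4$ with a single $C_4$ factor is reached.

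It then remains to show $\delta(C_4\times C_2^{n-2})\le\delta(D_8\times C_2^{n-3})$. Since $D_8\times C_2^{n-3}$ has at least as many subgroups of each order by Lemma 2.5, the difference is governed by the cyclic subgroups of order $4$. Here $C_4\times C_2^{n-2}$ has $2^{n-2}$ of them and $D_8\times C_2^{n-3}$ has $3\cdot2^{n-3}$, so only $c_2$ works against me. I would therefore compare $\delta_2$ directly. For $k=2$, counting subgroups of order $4$ gives
\[
\delta_2(C_4\times C_2^{n-2})=\left[\begin{array}{c}n-1\\2\end{array}\right]_2 ,
\]
and
\[
\delta_2(D_8\times C_2^{n-3})=s_2(D_8\times C_2^{n-3})-3\cdot 2^{n-3}.
\]
This I verify via Lemma 2.9 (Goursat), and for $k\ge3$ I use the subgroup counts directly. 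The main obstacle is exactly this final order-$4$ (and more generally small-$k$) comparison, which I would handle by explicit Goursat counting.
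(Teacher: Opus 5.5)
Your revised plan follows the paper's route: iterate Proposition 3.2 down to $C_4\times C_2^{n-2}$, then compare termwise, with $\delta_1=0$, with $\delta_k=s_k$ for $k\ge 3$ (exponent $4$) plus Lemma 2.5, and with a separate count at $k=2$. But the $k=2$ step, which you single out as the main obstacle, is never carried out, and the formula you propose for it is wrong. The error is in your element count. $D_8$ has $2$ elements of order $4$ and $5$ involutions, so $D_8\times C_2^{n-3}$ has $2^{n-2}$ elements of order $4$ and $3\cdot 2^{n-2}-1$ involutions. Your figures, together with the identity, add up to $2^{n+1}$, and your value $c(D_8\times C_2^{n-3})=13\cdot 2^{n-3}$ exceeds the order of the group. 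Hence $c_2(D_8\times C_2^{n-3})=2^{n-3}$, not $3\cdot 2^{n-3}$. Your formula $\delta_2(D_8\times C_2^{n-3})=s_2(D_8\times C_2^{n-3})-3\cdot 2^{n-3}$ then gives $\delta_2(D_8)=3-3=0<1=\delta_2(C_4\times C_2)$ at $n=3$, so it points toward a false inequality; the true value is $\delta_2(D_8)=2$.

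With the correct count the obstacle disappears, and $c_2$ works for you rather than against you. Since $c_2(C_4\times C_2^{n-2})=2^{n-2}\ge 2^{n-3}=c_2(D_8\times C_2^{n-3})$, Lemma 2.5 alone gives
\[
\delta_2(C_4\times C_2^{n-2})=s_2(C_4\times C_2^{n-2})-2^{n-2}\le s_2(D_8\times C_2^{n-3})-2^{n-3}=\delta_2(D_8\times C_2^{n-3}),
\]
with no Goursat counting. This is a slightly shorter finish than the paper's, which computes both sides explicitly as $\left[\begin{array}{c}n-1\\2\end{array}\right]_2$ and $2\left[\begin{array}{c}n-1\\2\end{array}\right]_2-\left[\begin{array}{c}n-2\\2\end{array}\right]_2$.

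The correct numbers also show why your first plan fails. Already for $r=1$, $c(D_8\times C_2^{n-3})=7\cdot 2^{n-3}>6\cdot 2^{n-3}=c(C_4\times C_2^{n-2})$, because $D_8\times C_2^{n-3}$ has $2^{n-2}$ more involutions. The termwise comparison neutralizes exactly this surplus, since $\delta_1=0$.

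Finally, ``strictly lowers exponents'' is not the right invariant for the reduction: $C_4\times C_4\mapsto C_4\times C_2^2$ keeps exponent $4$. Use $|\Phi(G)|$ instead. Choosing $N\le\Phi(G)$ of order $2$ gives $\Phi(G/N\times C_2)=\Phi(G)/N$, so $|\Phi|$ halves at each step. The group stays non-elementary abelian while $|\Phi(G)|\ge 4$, and the process stops exactly at $C_4\times C_2^{n-2}$, where $|\Phi|=2$.
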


\noindent {\bf Proof.} Since $G$ is not elementary abelian, then
\begin{center}
$\delta_k(G) \leq \delta_k(C_4 \times  C_{2}^{n-2})$,  for all $0\leq k\leq n$,
\end{center}
 by using
Theorem 3.2 again and again. So it suffices to prove that
\begin{center}
$\delta_k(G_1\times  C_{2^{n-3}})\leq\delta_k(D_8\times C_{2^{n-3}})$, for all $2\leq k\leq n$.

\end{center}
where $G_1 = C_4\times C_2$. Observe that $\exp(D_8\times C_{2^{n-3}})=\exp(G_1\times  C_{2^{n-3}})=2^2$, and applying Lemma 2.5, if $3\leq k\leq n$, we have
\begin{center}
$\delta_k(G_1\times  C_{2^{n-3}})=s_k(G_1\times  C_{2^{n-3}}))\leq s_k(D_8\times C_{2^{n-3}})=\delta_k(D_8\times C_{2^{n-3}})$.
\end{center}
Thus, we only need to show that $\delta_2(G_1\times  C_{2^{n-3}})\leq \delta_2(D_8\times C_{2^{n-3}})$.

It is easy to calculate  that
 $\delta_2(G_1\times  C_{2^{n-3}})=\left [\begin{array} {c}
 n-1\\
 2
 \end{array}\right ]_2$,
  and
 \begin{center}
    $\delta_2(D_8\times C_{2^{n-3}})=
    2\cdot\left [\begin{array} {c}
 n-1\\
 2
 \end{array}\right ]_2-\left [\begin{array} {c}
 n-2\\
 2
 \end{array}\right ]_2$.
 \end{center}
So we have $\delta_2(G_1\times  C_{2^{n-3}})\leq \delta_2(D_8\times C_{2^{n-3}})$ and hence $\delta(G_1\times  C_{2^{n-3}})\leq \delta(D_8\times C_{2^{n-3}})$. The proof of theorem is complete. $\Box$
\vskip 0.3cm

\noindent {\bf Proof of Theorem 1.3.}  Suppose that $G$ is a non-elementary abelian group of order $2^n$ and $|G'| =2^m$. If $m = 0$, then $G$  is abelian and
the conclusion holds by Theorem 3.4. So we can assume that $m \geq 1$.

Firstly, suppose that $G/G'$ is not elementary abelian. Applying Theorem 3.3, we have
\begin{center}
$\delta_k(G)\leq \delta_k(G/G'\times C_{2}^{m})$, for any $2\leq k\leq n$.
\end{center}
Moreover, we obtain that
$\delta(G)\leq \delta(D_8\times C_{2^{n-3}})$  by Theorem 3.4.

Secondly, suppose that $G/G'$ is   elementary abelian, then $G'=\Phi(G)$.  Let $M$ be a normal
subgroup of $G$ such that $M \leq G'$ and $|G' :M|  = 2$.  Applying Theorem 3.3 again, we have
\begin{center}
$\delta_k(G)\leq\delta_k(G/M\times C_{2}^{m-1})$, for any $2\leq k\leq n$.
\end{center}
Write $G_1=G/M$, then $G_{1}'=\Phi(G_1)\leq Z(G_1)$ and $|G_{1}'|=2$. So $G_1$ is a generalized extraspecial $2$-group. Consequently, by Lemma 2.6, we have
\begin{center}
$G_1\cong E\times A$ or $(E*C_4)\times A$.
\end{center}
where $E$ is an extraspecial $2$-group and $A$ is an elementary abelian $2$-group. In other words, $G_1$ is a direct product of an (almost) extraspecial $2$-group and an elementary abelian $2$-group. So, it suffices to prove that if
$G_2$ is an (almost) extraspecial $2$-group of order $2^q$, then
\begin{center}
$\delta_k(G_2 \times C_{2}^{n-q})\leq\delta_k((D_8\times C_{2}^{q-3})\times C_{2}^{n-q})$.
\end{center}
 Observe that $\exp((D_8\times C_{2}^{q-3})\times C_{2}^{n-q})=\exp(G_2 \times C_{2}^{n-q})=2^2$, and applying Lemma 2.5,  we have
\begin{align*}
 \delta_k(G_2 \times C_{2}^{n-q})&=s_k(G_2 \times C_{2}^{n-q})\\
 &\leq s_k((D_8\times C_{2}^{q-3})\times C_{2}^{n-q})\\
 &=\delta_k((D_8\times C_{2}^{q-3})\times C_{2}^{n-q}),
\end{align*}
  for  $3\leq k\leq n$.
Thus, we only need to show that $\delta_2(G_2 \times C_{2}^{n-q})\leq \delta_2((D_8\times C_{2}^{q-3})\times C_{2}^{n-q})$.

In the following, let $A$ be one of the groups $G_2$ or $D_8\times C_{2}^{q-3}$.
  From Lemma 2.9,   it follows
that an elementary abelian subgroup $H$ of order $2^2$ of  $ A \times  C_{2}^{n-q}$
 is completely determined by
a quintuple $(A_1,A_2,B_1,B_2, \varphi)$, where $A_1 \trianglelefteq A_2 \leq A$, $B_1 \trianglelefteq B_2 \leq C_{2}^{n-q}$, $\varphi: A_2/A_1 \longrightarrow   B_2/B_1$ is an isomorphism, and $|A_2||B_1|=|A_1||B_2| = 2^2$.
Our proof will be divided into the    following three cases:

\vskip 0.3cm

Case I: $A_2 = 1$.

In this case,  we have $A_1=1$ and $B_1=B_2$ is one of the $\left [\begin{array} {c}
 n-q\\
 2
 \end{array}\right ]_2$
  subgroups of order $2^2$ of $C_{2}^{n-q}$. Clearly, these determine
  $\left [\begin{array} {c}
 n-q\\
 2
 \end{array}\right ]_2$
 distinct subgroups of $A\times C_{2}^{n-q}$ which are isomorphic to $C_{2}^2$.
\vskip 0.3cm
 Case II: $A_2\cong C_2$.

 Observe that  $|A_2||B_1|=2^2$,   $B_1$ is of order $2$  and can be chosen in$\left [\begin{array} {c}
 n-q\\
 1
 \end{array}\right ]_2$
ways. If $A_1 = A_2$
then $B_2 = B_1$, while if $A_1 = 1$ then $B_2$ is one of the $2^{n-q-1}-1$ subgroups
of order $2^2$ of $C_{2}^{n-q}$
 containing $B_1$. So  we obtain
 $s_1(A)\cdot2^{n-q-1}\cdot\left [\begin{array} {c}
 n-q\\
 1
 \end{array}\right ]_2$  distinct subgroups of $A\times C_{2}^{n-q}$ which are isomorphic to $C_{2}^2$.
\vskip 0.3cm

 Case III: $A_2\cong C_{2}^2$.

 In this case, $B_1=1$. If $A_1=A_2$, then $B_2=B_1=1$. If $|A_1| = 2$ then $B_2$ is one of the $2^{n-q}- 1$ subgroups
of order $2$ of $C_{2}^{n-q}$. If  $A_1 = 1$,  then $B_2$  is one of the
$\left [\begin{array} {c}
 n-q\\
 2
 \end{array}\right ]_2$  subgroups of order $2^2$ of $C_{2}^{n-q}$. So  we  obtain
 \begin{center}
 $\delta_2(A)\cdot\left ((2^{n-q}-1)+\left [\begin{array} {c}
 n-q\\
 2
 \end{array}\right ]_2\right)$
 \end{center}
  distinct subgroups of $A\times C_{2}^{n-q}$ which are isomorphic to $C_{2}^2$.
\vskip 0.3cm
 By above arguments, we get
 \begin{center}
 $ \delta_2(A\times C_{2}^{n-q})=\left [\begin{array} {c}
 n-q\\
 2
 \end{array}\right ]_2+s_1(A)\cdot2^{n-q-1}\cdot\left [\begin{array} {c}
 n-q\\
 1
 \end{array}\right ]_2+\delta_2(A)\cdot\left ((2^{n-q}-1)+\left [\begin{array} {c}
 n-q\\
 2
 \end{array}\right ]_2\right)$.
 \end{center}

Finally, by Lemmas 2.5 and 2.8, we know that
\begin{center}
$s_1(G_2)\leq s_1(D_8\times C_{2}^{q-3})$ and $\delta_2(G_2)\leq\delta_2(D_8\times C_{2}^{q-3})$.
\end{center}
Consequently, we have
\begin{align*}
\delta_2(G_2\times C_{2}^{n-q})&=\left [\begin{array} {c}
 n-q\\
 2
 \end{array}\right ]_2+s_1(G_2)\cdot2^{n-q-1}\cdot\left [\begin{array} {c}
 n-q\\
 1
 \end{array}\right ]_2+\delta_2(G_2)\cdot\left ((2^{n-q}-1)+\left [\begin{array} {c}
 n-q\\
 2
 \end{array}\right ]_2\right) \\
 &\leq \left [\begin{array} {c}
 n-q\\
 2
 \end{array}\right ]_2+s_1(D_8\times C_{2}^{q-3})\cdot2^{n-q-1}\cdot\left [\begin{array} {c}
 n-q\\
 1
 \end{array}\right ]_2\\
 &+\delta_2(D_8\times C_{2}^{q-3})\cdot\left ((2^{n-q}-1)+\left [\begin{array} {c}
 n-q\\
 2
 \end{array}\right ]_2\right)\\
 &= \delta_2((D_8\times C_{q-3})\times C_{2}^{n-q}).
\end{align*}
Now we have shown that $\delta(G)\leq \delta(D_8\times C_{2}^{n-3})$. The proof of theorem is complete.
$\Box$\\

\noindent{\bf Availability of data and materials}

Data sharing not applicable to this article as no data sets were generated or
analysed during the current study. \\

\noindent{\bf Declarations
Competing interests }

The authors have no relevant financial or non-financial interests to disclose. \\



\bibliographystyle{amsplain}

\end{document}